\newtheorem{theorem}{Theorem}[section]
\newtheorem{lemma}[theorem]{Lemma}
\newtheorem{proposition}[theorem]{Proposition}
\begin{document}
	\title{Affine fractional $L_{p}$  P\'olya-Szeg\"o inequalities}
	\author[Youjiang Lin, Jiaming Lan, Jinghong Zhou]
	{Youjiang Lin, Jiaming Lan, Jinghong Zhou}
	\address{School of Mathematics and Statistics, Chongqing Technology and Business University, Chongqing,
400067, China} \email{yjl@ctbu.edu.cn}
	\address{School of Mathematics and Statistics, Chongqing Technology and Business University, Chongqing,
400067, China} \email{lanjiaming@ctbu.edu.cn}
    \address{School of Mathematics and Statistics, Chongqing Technology and Business University, Chongqing,
400067, China} \email{zjh@ctbu.edu.cn}
     
	\thanks{This work is supported in part by funds from the National Natural Science Foundation of China NSFC 12371137 and NSFC 11971080.}
	\subjclass[2020]{Primary: 46E35, 52A40} \keywords{Dual mixed volumes, Fractional Sobolev space, Affine inequality, $L_{p}$  P\'olya-Szeg\"o inequality, Functional rearrangement}
	\maketitle
	\pagestyle{myheadings}
	\markboth{Youjiang Lin et al.}{Affine fractional $L_{p}$  PS inequalities}
	
	\begin{abstract}
 Affine fractional $L_{p}$  P\'olya-Szeg\"o inequalities for two functions on $\mathbb{R}^n$ are established, which are stronger than the Euclidean  fractional $L_{p}$  P\'olya-Szeg\"o inequalities.
	\end{abstract}

\maketitle
\section{Introduction}
	The classical P\'olya-Szeg\"o principle \cite{PS51} asserts that the $L^p$ norm of the gradient of a function defined on $\mathbb{R}^n$ is non-increasing under symmetric decreasing rearrangement. The profound result serves as a cornerstone in solving numerous variational problems across analysis, particularly in establishing optimal forms of isoperimetric inequalities, deriving sharp constants in Sobolev embeddings, and obtaining precise a priori estimates for solutions to second-order elliptic and parabolic boundary value problems. The principle's far-reaching implications make it an indispensable tool in modern PDE theory and geometric analysis (see, e.g.,  \cite{TA76, C00, CF06, FV04}).
	
	A full affine counterpart to the classical P\'olya-Szeg\"o principle was developed by A. Cianchi, E. Lutwak, D. Yang and G. Zhang \cite{CLYZ09}, extending fundamental results from G. Zhang \cite{Zhang99} and E. Lutwak, D. Yang and G. Zhang \cite{LYZ02}. In the affine P\'olya-Szeg\"o inequality, the $L^p$ norm of the Euclidean length of the gradient is replaced by an affine invariant of functions, the $L^p$ affine energy, leading to an inequality which is significantly stronger than its classical Euclidean counterpart.
	In addtion, C. Haberl, F.E. Schuster and J. Xiao \cite{CF12} established a notable asymmetric form of the affine P\'olya-Szeg\"o inequality, enhancing  the affine P\'olya-Szeg\"o principle due to Cianchi et al \cite{CLYZ09}. G. Talenti \cite{GT94} establish a Euclidean Orlicz P\'olya-Szeg\"o principle, which extended the classical P\'olya-Szeg\"o principle to Orlicz-Sobolev spaces. Y. Lin \cite{Lin17} prove an affine Orlicz P\'olya-Szeg\"o principle for $\log$-concave functions by functional Steiner symmetrizations. In recent years, many important generalizations and variations of affine P\'olya-Szeg\"o principle have been obtained (see, e.g., \cite{GZ98, CF09,CF09-, HJM16, Lutwak00, N16, Wang12,Wang13}).
	
	F.J. Almgren and E.H. Lieb \cite[Theorem 9.2]{AG89} established Euclidean fractional $L_p$ P\'olya-Szeg\"o inequality in the fractional Sobolev space:
\begin{eqnarray}\label{e1.6}
\int_{\mathbb{R}^n}\int_{\mathbb{R}^n}\frac{{(f\left ( x \right )-f\left ( y \right ))}^{p}}{{|x-y|}^{n+ps}}dxdy\ge\int_{\mathbb{R}^n}\int_{\mathbb{R}^n}\frac{{(f^{\star}\left ( x \right )-f^{\star}\left ( y \right ))}^{p}}{{|x-y|}^{n+ps}}dxdy
\end{eqnarray}
for $0<s<1$, $p\ge 1$ and $f\in W^{s,p}\left ( \mathbb{R}^{n} \right )$, where $f^{\star}$ denotes the symmetric decreasing rearrangement and  $W^{s,p}\left ( \mathbb{R}^{n} \right )$ denotes the fractional Sobolev space of $L_p$ functions (see Section {\ref{s2}} for detailed definitions). J. Haddad and M. Ludwig \cite{JM24} established anisotropic Euclidean fractional P\'olya-Szeg\"o inequalities for fractional $L_p$ Sobolev norms: If $f\in L^{p}\left ( \mathbb{R}^{n}  \right )$ is non-negative and $K\subset \mathbb{R}^{n}  $ a star body, then
\begin{eqnarray}\label{122}
	\int_{\mathbb{R}^{n} }\int_{\mathbb{R}^{n} }\frac{\left | f\left ( x \right )-f\left ( y \right ) \right | ^{p}    }{\left \| x-y \right \|^{n+ps} _{K}  }dxdy\ge\int_{\mathbb{R}^{n} }\int_{\mathbb{R}^{n} }\frac{\left | f^{\star } \left ( x \right )-f^{\star } \left ( y \right ) \right | ^{p}   }{\left \| x-y \right \|^{n+ps} _{K^{\star } }  }dxdy .
\end{eqnarray}

 The anisotropic $s$-seminorms, i.e., the left side of (\ref{122}), introduced  by M. Ludwig \cite{Ludwig14}, reflect a fine structure of the anisotropic fractional Sobolev spaces. She established that
 \begin{eqnarray}\label{e133}
 \lim _{s \rightarrow 1^{-}}(1-s) \int_{\mathbb{R}^n}\int_{ \mathbb{R}^n} \frac{|f(x)-f(y)|^p}{\|x-y\|_K^{n+p s}} d x d y=\frac{2}{p} \int_{\mathbb{R}^n}\|\nabla f(x)\|_{Z_p^* K}^p d x
 \end{eqnarray}
 for $f \in W^{1, p}\left(\mathbb{R}^n\right)$ with compact support, where the norm associated with $Z_p^* K$, the polar $L_p$ moment body of $K$, is defined as
 \begin{eqnarray}
  \|v\|_{Z_p^* K}^p=\frac{n+p}{2} \int_K|v \cdot x|^p d x
 \end{eqnarray}
 for $v \in \mathbb{R}^n$ and a convex body $K \subset \mathbb{R}^n$.
 Later, D. Ma \cite{Ma14} proved the asymmetric version of (\ref{e133}).
 
In the remarkable paper \cite{JM24}, J. Haddad and M. Ludwig also  obtained affine fractional $L_p$ P\'olya-Szeg\"o inequalities:
\begin{eqnarray}\label{1.7}
&&\int_{\mathbb{R}^n}\int_{\mathbb{R}^n}\frac{{(f\left ( x \right )-f\left ( y \right ))} ^{p}}{{|x-y|}^{n+ps}  }dxdy\nonumber\\&\ge&
n\omega _{n}^{\frac{n+ps}{n} }\left ( \frac{1}{n}\int_{\mathbb{S}^{n-1}  }\left ( \int_{0}^{\infty }t^{ps-1}\int_{\mathbb{R}^{n}  }\left | f\left ( x+t\xi  \right )-f\left ( x \right )   \right |^{p} dxdt  \right )^{-\frac{n}{ps} }d\xi       \right )^{-\frac{ps}{n} }\nonumber\\&\ge& \int_{\mathbb{R}^n}\int_{\mathbb{R}^n}\frac{{(f^{\star}\left ( x \right )-f^{\star}\left ( y \right ))}  ^{p}}{{|x-y|}^{n+ps}}dxdy
,
\end{eqnarray}
for  $0<s<1$, $1<p<n/s$ and $f\in W^{s,p}\left ( \mathbb{R}^{n} \right )$. That is significantly stronger than  the  Euclidean fractional $L_p$ P\'olya-Szeg\"o inequality (\ref{e1.6}). More papers on fractional inequalities, see, e.g., \cite{JM25, K21, Ma14}.

The paper aims to establish affine fractional $L_{p}$  P\'olya-Szeg\"o inequalities  on two different functions, which generalize the affine fractional $L_{p}$ P\'olya-Szeg\"o inequalities (\ref{1.7}). For $0 < s <1$ and $p\ge1$, we define the generalized fractional Sobolev space ${W}^{s,p}(\mathbb{R}^{n},\; \mathbb{R}^{2})$ assoicated with $f$ and $h$ as
\begin{eqnarray*}
	{W}^{s,p}(\mathbb{R}^{n}, \mathbb{R}^{2})= \left\{(f,h):f,\;h\in{L}^{p}(\mathbb{R}^{n}),\;\int_{\mathbb{R}^{n}}\int_{\mathbb{R}^{n}}\frac{{|f(x)-h(y)|}^{p}}{{| x-y|}^{n+ps}}dxdy<\infty \right\}.
\end{eqnarray*}
\begin{theorem}\label{1.a} Let $0 < s < 1$ and $1 < p < n/s$. For non-negative functions $(f,h)\in{W}^{s,p}(\mathbb{R}^{n},\mathbb{R}^{2})$ and $f\in {W}^{s,p}(\mathbb{R}^{n})$,
	\begin{eqnarray}\label{1.e}
		&&\int_{\mathbb{R}^n}\int_{\mathbb{R}^n}\frac{{(f\left ( x \right )-h\left ( y \right ))} ^{p}}{{|x-y|}^{n+ps}  }dxdy\nonumber\\
		&\ge&
		n\omega _{n}^{\frac{n+ps}{n} }\left ( \frac{1}{n}\int_{\mathbb{S}^{n-1}  }\left ( \int_{0}^{\infty }t^{ps-1}\int_{\mathbb{R}^{n}  }\left | f\left ( x+t\xi  \right )-h\left ( x \right )   \right |^{p} dxdt  \right )^{-\frac{n}{ps} }d\xi  \right )^{-\frac{ps}{n} }\nonumber\\&\ge& \int_{\mathbb{R}^n}\int_{\mathbb{R}^n}\frac{{(f^{\star}\left ( x \right )-h^{\star}\left ( y \right ))}  ^{p}}{{|x-y|}^{n+ps}}dxdy.   
	\end{eqnarray}
	 There is equality in the first inequality if $f$, $h$ are radially symmetric.There is equality in the second inequality if and only if $f= f^{\star }(\phi x+x_{0} )$, $h= h^{\star }(\phi x+x_{0} )$ for some $\phi \in GL(n)$ and $x_{0}\in \mathbb{R}^n$.
\end{theorem}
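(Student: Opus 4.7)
The plan is to follow the strategy of Haddad-Ludwig for the single-function inequality (\ref{1.7}) and adapt it to the pair $(f, h)$. First, to establish the first inequality in (\ref{1.e}), I rewrite the LHS via the change of variables $y = x + t\xi$ (for $t > 0$, $\xi \in S^{n-1}$, $dy = t^{n-1}dt \, d\xi$) and exploit the symmetry $\xi \leftrightarrow -\xi$ on the sphere to obtain
\[
\int_{\R^n}\int_{\R^n}\frac{|f(x) - h(y)|^p}{|x-y|^{n+ps}} \, dx\,dy = \int_{S^{n-1}} A(\xi) \, d\xi,
\]
where $A(\xi)$ denotes the inner integral appearing in the middle term of (\ref{1.e}). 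H\"older's inequality on $S^{n-1}$ with conjugate exponents $(n+ps)/n$ and $(n+ps)/(ps)$, applied to the factorization $1 = A^{n/(n+ps)} \cdot A^{-n/(n+ps)}$, gives
\[
(n\omega_n)^{(n+ps)/n} \le \Bigl(\int_{S^{n-1}} A \, d\xi\Bigr)^{n/(n+ps)} \Bigl(\int_{S^{n-1}} A^{-n/(ps)} d\xi\Bigr)^{ps/(n+ps)},
\]
which after rearrangement yields the first inequality of (\ref{1.e}). Equality holds precisely when $A(\xi)$ is constant on $S^{n-1}$, which is guaranteed whenever both $f$ and $h$ are radially symmetric.

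For the second inequality, I would adapt the anisotropic two-function P\'olya-Szeg\"o principle (\ref{122}) to the fractional setting. Introduce the star body $L_{f, h}$ with radial function $\rho_{L_{f, h}}(\xi) = A(\xi)^{-1/(ps)}$, so that $\tfrac{1}{n}\int_{S^{n-1}} A(\xi)^{-n/(ps)} d\xi = |L_{f, h}|$; for the radial pair $(f^*, h^*)$, $L_{f^*, h^*}$ is a Euclidean ball of volume $\omega_n (A^*)^{-n/(ps)}$. The desired second inequality in (\ref{1.e}) is then equivalent to the volume comparison $|L_{f, h}| \le |L_{f^*, h^*}|$. I would prove this by Steiner symmetrization in a dense sequence of directions: show that each symmetrization of the pair $(f, h)$ does not decrease $|L_{f, h}|$, and take the limit toward $(f^*, h^*)$.

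The main obstacle is precisely this Steiner monotonicity of $|L_{f, h}|$. Because of the shift structure $|f(x + t\xi) - h(x)|^p$ in the definition of $A$, the standard two-function Lorentz inequality $\int|g - k|^p \ge \int|g^* - k^*|^p$ does \emph{not} yield a pointwise comparison $A^{S_\eta}(\xi) \le A(\xi)$; simple one-dimensional examples show that the shifted quantity $\int |g(\tau + t) - k(\tau)|^p d\tau$ is not in general monotone under rearrangement of $g$ and $k$. The Steiner step must therefore rely on a global argument, most naturally a Brascamp-Lieb-Luttinger or Riesz rearrangement inequality applied to the full anisotropic kernel, followed by Fubini on the sphere. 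Once this monotonicity is in place, the equality analysis (through the equality case of one-dimensional rearrangement, combined with the reduction to a dual mixed volume inequality where equality up to $GL(n)$ is standard) yields $f = f^*(\phi x + x_0)$ and $h = h^*(\phi x + x_0)$ for some $\phi \in GL(n)$ and $x_0 \in \R^n$.
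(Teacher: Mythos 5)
Your treatment of the first inequality is correct and is in substance identical to the paper's: the change of variables $y=x+t\xi$ turns the left-hand side into $\int_{\mathbb{S}^{n-1}}A(\xi)\,d\xi$ with $A(\xi)=\|\xi\|^{ps}_{\Pi_p^{*,s}(f,h)}$, and your H\"older step on the sphere is exactly the dual mixed volume inequality $\widetilde V_{-ps}(B^n,\Pi_p^{*,s}(f,h))\ge \omega_n^{(n+ps)/n}|\Pi_p^{*,s}(f,h)|^{-ps/n}$ that the paper invokes (there is a small typo in your displayed H\"older inequality: the left side should be $n\omega_n$ before both sides are raised to the power $(n+ps)/n$). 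Your reduction of the second inequality to the volume comparison $|\Pi_p^{*,s}(f,h)|\le|\Pi_p^{*,s}(f^\star,h^\star)|$ is also the paper's route (its Lemma 6.2), using that the body associated with the radial pair is a ball.

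The genuine gap is that this volume comparison is never proved: you propose Steiner symmetrization, correctly observe that the pointwise comparison of $A(\xi)$ fails because of the shift structure, and then only name the correct tool ("Riesz applied to the full anisotropic kernel") without executing it. Two concrete ingredients are missing. First, to apply Riesz one must layer-cake both the kernel, $\|z\|_K^{-n-ps}=\int_0^\infty 1_{t^{-1/(n+ps)}K}(z)\,dt$, and the two-function integrand; in the two-function setting the paper uses $(f(x)-h(y))_+^p=p\int_0^\infty(f(x)-r)_+^{p-1}\bigl(1-1_{\{h\ge r\}}(y)\bigr)\,dr$, so that the genuine convolution triple enters with a \emph{negative} sign and the Riesz inequality (which increases that triple under rearrangement) produces the inequality in the desired direction — this sign trick is the heart of the two-function generalization and is absent from your sketch. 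Second, the anisotropic inequality obtained this way compares $\widetilde V_{-ps}(K,\Pi_p^{*,s}(f,h))$ with $\widetilde V_{-ps}(K^\star,\Pi_p^{*,s}(f^\star,h^\star))$ for every star body $K$; to extract the volume inequality one must make the specific choice $K=\Pi_p^{*,s}(f,h)$ and apply the dual mixed volume (H\"older) inequality once more, a step you do not mention. Finally, the equality characterization requires Burchard's theorem for the Riesz inequality together with an argument that the resulting ellipsoid and translation vectors are independent of the level parameters; asserting that this "yields" the stated equality case is not a proof. As written, the argument establishes the first inequality but only outlines a program for the second.
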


In order to prove Theorem \ref{1.a}, we define the {\it generalized fractional $L_p$ polar projection body} ${\Pi}_{p}^{*,s}(f,h)$ associated with $f$ and $h$, defined as the  star-shaped set whose  gauge function for $\xi\in \mathbb{S}^{n-1}$,
\begin{eqnarray}
	{\left \| \xi \right \|}_{{\Pi}_{p}^{*,s}(f,h)}^{ps}=\int_{0}^{\infty }{t }^{-ps-1} \int_{\mathbb{R}^{n}}{|f(x+t\xi) -h(x)|}^{p}dxdt.
\end{eqnarray}
The  second inequality in (\ref{1.e}) now can be written as
\begin{eqnarray}\label{1.g}
			n{\omega} _{n}^{\frac{n+ps}{n}} |{\Pi }_{p}^{*,s}(f,h)|^{-\frac{ps}{n}}\ge\int_{\mathbb{R}^n}\int_{\mathbb{R}^n}\frac{{(f^{\star}\left ( x \right )-h^{\star}\left ( y \right ))}  ^{p}}{{|x-y|}^{n+ps}}dxdy.
\end{eqnarray}
We note that both sides of (\ref{1.g}) are translation invariant with respect to $f$ and $h$, and for volume-preserving linear transformations $\phi :\mathbb{R}^{n}\to \mathbb{R}^{n} $,
\begin{eqnarray}
	{\textstyle \Pi_{p}^{\ast ,s}}\left ( f\circ \phi ^{-1}, h\circ \phi ^{-1} \right )=\phi  {\textstyle \Pi_{p}^{\ast ,s}}\left ( f, h \right ),
\end{eqnarray}
it implies that (\ref{1.g}) is a $SL(n)$ affine inequality. Moreover, for $r>0$, we have
\begin{eqnarray}
|{\Pi }_{p}^{*,s}(f\circ r,h\circ r)|^{-\frac{ps}{n}}=r^{-n+ps}|{\Pi }_{p}^{*,s}(f,h)|^{-\frac{ps}{n}},
\end{eqnarray}
\begin{eqnarray}
 \int_{\mathbb{R}^n}\int_{\mathbb{R}^n}\frac{{(f^{\star}\left ( rx \right )-h^{\star}\left ( ry \right ))}  ^{p}}{{|x-y|}^{n+ps}}dxdy=r^{-n+ps} \int_{\mathbb{R}^n}\int_{\mathbb{R}^n}\frac{{(f^{\star}\left ( x \right )-h^{\star}\left ( y \right ))}  ^{p}}{{|x-y|}^{n+ps}}dxdy.
\end{eqnarray}
They imply that (\ref{1.g}) is  a $GL(n)$ affine inequality.

This paper is organized as follows. In Section \ref{s2}, we  establish notations and list some basic facts of star-shaped sets, dual mixed volumes, functional Steiner symmetrizations and  Sobolev space. In Section \ref{s3}, we prove the generalized fractional $L_p$ polar projection body is a star body with the origin in its interior.
In Section \ref{s4}, we define generalized asymmetric fractoinal  $L_p$ polar projection body and prove the $L_p$ polar projection body is a star body with the origin in its interior.
In Section \ref{s5}, we establish generalized anisotropic fractional $L_p$ P\'olya-Szeg\"o inequality and its asymmetric counterpart. In fact, we prove that the dual mixed volume $\widetilde{V}_{-ps}\left ( K, {\textstyle \Pi_{p}^{\ast ,s}}\left ( f,h \right ) \right )$ is decreasing by symmetric decreasing rearrangements.
In Section \ref{s6}, we prove the main theorem, i.e., the affine fractonal $L_{p}$ P\'olya-Szeg\"o inequality on two different functions and its asymmetric counterpart.
	
	\section{Preliminaries}\label{s2}
	Let $\mathbb{R}^n$ denote $n$-dimensional Euclidean space with canonical inner product $x \cdot y$, for $x, y \in$ $\mathbb{R}^n$;
	throughout we assume that $n$ is a natural number. Let $o$ denote the origin of $\mathbb{R}^n.$
	Write $\|x\|=\sqrt{x \cdot x}$ for the norm of $x\in\mathbb{R}^n$. Let $\mathbb{S}^{n-1}$ denote the unit sphere in $\mathbb{R}^n$. Let $B^n$ denote the unit ball centered at origin. The group of  linear transformations of $\mathbb{R}^n$ is denoted by $\operatorname{GL}(n)$. The group of special linear transformations of $\mathbb{R}^n$ is denoted by $\operatorname{SL}(n)$. We will write $|K|$ rather than $V(K)$ to denote $n$-dimensional volume of $K\subset\mathbb{R}^{n}$.
	\subsection{Star-shaped sets and dual mixed volumes}
	For quick reference, we list some basic facts about star-shaped sets and dual mixed volumes. For more information, see R.J. Gardner \cite{Gardner06}, R. Schneider \cite{Schneider14} and E. Lutwak \cite{Lutwak75}.
	
	A closed set $K\subseteq \mathbb{R}^{n}  $ is regarded as {\it star-shaped} (with respect to the origin) if the interval $\left [ 0,x \right ]\subset K $ for every $x\in K$. The {\it radial function} ${\rho}_{K}:\mathbb{R}^n \setminus \{0\} \to [0,\infty ]$ is defined by
	\begin{eqnarray}
		{\rho}_{K}(x)=\sup\{ \lambda \geq 0:\lambda x\in K\}\nonumber
	\end{eqnarray}
	and the function $\left \| \cdot  \right \|_{K}:\mathbb{R}^{n}\to \left [ 0,\infty  \right ] $ of a  star-shaped set defined as 
	\begin{eqnarray}
		{||x ||}_{K}=\inf\{ \lambda >0:x\in\lambda K\}.\nonumber
	\end{eqnarray}
	is called the {\it gauge function} of $K$.
	
	For a star-shaped set $K$ in $\mathbb{R}^n$ whose radial function is measurable, its $n$-dimensional Lebesgue measure or volume is given by
	\begin{equation}
		\left|K\right|=\frac{1}{n}\int_{\mathbb{S}^{n-1}}\rho^{n} _{K}\left ( \xi  \right )d\xi. 
	\end{equation}
	We call a  star-shaped set $K \subset \mathbb{R}^n$ a {\it star body} if its  radial function is  positive and locally Lipschitz continuous in  $\mathbb{R}^{n}\setminus \{0\} $. On the set of star bodies, the {\it $q$-radial sum} $K\tilde{+}_{q}L$ for $q\ne 0$ of $K,L\subset \mathbb{R}^{n}$ is defined by
	\begin{eqnarray}\label{e2.2}
		\rho ^{q}\left ( K\tilde{+}_{q}L,\xi    \right )= \rho ^{q}\left ( K,\xi    \right )+\rho ^{q}\left ( L,\xi \right )
	\end{eqnarray}
	for $\xi\in \mathbb{S}^{n-1} $ (cf. \cite[Section 9.3]{Schneider14}). The {\it dual Brunn-Minkowski inequality} (cf. \cite [(9.41)]{Schneider14}) states that for star bodies $K,L\subset \mathbb{R}^{n}$ and $q>0$, 
	\begin{eqnarray}\label{2.3}
		\left | K\tilde{+}_{-q}L   \right |^{-q/n}\ge \left | K \right |^{-q/n}+\left | L \right |^{-q/n},
	\end{eqnarray}
	with equality precisely if $K$ and $L$ are dilates, that is, $\lambda >0$ such that $K=\lambda L$.
	
	Let $\alpha \in \mathbb{R} \setminus \{0, n\}$. For  star bodies  $K, L \subset \mathbb{R}^n$, the {\it dual mixed volume} is defined as
	\begin{eqnarray}
		\widetilde{V}_{\alpha}\left ( K,L \right )=\frac{1}{n}\int_{\mathbb{S}^{n-1} }\rho_{K }\left ( \xi  \right ) ^{n-\alpha }\rho_{L}\left ( \xi  \right )^{\alpha }d\xi.\nonumber
	\end{eqnarray}
	Note that $\widetilde{V}_{\alpha}\left (K,K\right)=|K|$ and that 
	\begin{eqnarray}
		\widetilde{V}_{\alpha } \left ( K, L_{1}\tilde{+}_{\alpha } L_{2}    \right )= \widetilde{V}_{\alpha }\left ( K,L_{1}  \right )+\widetilde{V}_{\alpha }\left ( K,L_{2}  \right )
	\end{eqnarray}
	for  star bodies $K,L_{1},L_{2} \subseteq \mathbb{R}^{n} $.
	
	For $\alpha<0$ or $\alpha>n$, the {\it dual mixed volume inequality} states that 
	\begin{eqnarray}\label{2a}
		\widetilde{V}_{\alpha} (K,L)\ge |K|^{n-\alpha/n}|L|^{\alpha/n},
	\end{eqnarray}
	and the reverse inequality
	\begin{eqnarray}\label{2aa}
		\widetilde{V}_{\alpha} (K,L)\le |K|^{n-\alpha/n}|L|^{\alpha/n}
	\end{eqnarray}
	 holds for $0<\alpha<n$. 
	Equalities in (\ref{2a}) and (\ref{2aa}) hold if and only if $K$ and $L$ are dilates, in other words, $\rho _{K}=c \rho _{L}$ almost everywhere on $\mathbb{S}^{n-1}$ for some $c>0$. The definition of dual mixed volumes for star bodies is attributed to E. Lutwak \cite{Lutwak75}, the dual mixed volume inequality is deduced from H\"older's inequality as well (see \cite[Section 9.3]{Schneider14} or \cite[B.29]{Gardner06}).
	\subsection{Function spaces}\label{s2.2}
	In the section we summarize the necessary definitions about Sobolev space. For addtional details,  the reader could consult the book of Maz'ya \cite{Mazya11} and Adams \cite {Adams03}.
	
	For $p \ge 1$ and measurable $f : \mathbb{R}^{n}\to\mathbb{R}$, let
	\begin{eqnarray*}
		{\left \| f \right \|}_{p}= {\left(\int_{\mathbb{R}^{n}}{|f(x)|}^{p}dx\right)}^{1/p}.
	\end{eqnarray*}
	We set the {\it super-level sets} $\{f \ge t\}=\{x \in \mathbb{R}^{n} : f(x) \ge t\}$ for $t \in \mathbb{R}$. A function $f$ is called non-zero when $\{ f\neq 0\}$ has postive measure, with functions being treated as equivalent when they concide expect on a null set. For $p \ge 1$, let
	\begin{eqnarray*}
		{L}^{p}(\mathbb{R}^{n})= \left\{f:\mathbb{R}^{n}\to\mathbb{R}:f \;{\rm is\; measurable}, {\left \| f \right \|}_{p}<\infty \right\}.
	\end{eqnarray*}
In this context, measurability is always defined in terms of the standard Lebesgue measure on $\mathbb{R}^{n}$.
	
	For $0 < s <1$ and $p\ge1$, the fractional Sobolev space ${W}^{s,p}(\mathbb{R}^{n})$ (see  \cite[Section 9.1]{AG89}) is defined as
	\begin{eqnarray}\label{2.2a}
		{W}^{s,p}(\mathbb{R}^{n})= \left\{f\in{L}^{p}(\mathbb{R}^{n}):\int_{\mathbb{R}^{n}}\int_{\mathbb{R}^{n}}\frac{{|f(x)-f(y)|}^{p}}{{| x-y|}^{n+ps}}dxdy<\infty \right\}.
	\end{eqnarray}
	For $p\ge1$, we set
	\begin{eqnarray}
		{W}^{1,p}(\mathbb{R}^{n})= \left\{f\in{L}^{p}(\mathbb{R}^{n}):|\nabla f|\in{L}^{p}(\mathbb{R}^{n}) \right\},
	\end{eqnarray}
	where $\nabla f$ is the weak gradient of $f$.
	
	The generalized fractional Sobolev space ${W}^{s,p}(\mathbb{R}^{n},\; \mathbb{R}^{2})$ concerning two different functions is defined as
	\begin{eqnarray}\label{e1.4}
		{W}^{s,p}(\mathbb{R}^{n}, \mathbb{R}^{2})= \left\{(f,h):f,\;h\in{L}^{p}(\mathbb{R}^{n}),\;\int_{\mathbb{R}^{n}}\int_{\mathbb{R}^{n}}\frac{{|f(x)-h(y)|}^{p}}{{| x-y|}^{n+ps}}dxdy<\infty \right\}.
	\end{eqnarray}
By (\ref{2.2a}) and (\ref{e1.4}), if $(f,f)\in {W}^{s,p}(\mathbb{R}^{n},\; \mathbb{R}^{2})$, then $f\in {W}^{s,p}(\mathbb{R}^{n})$.
	\subsection{Symmetrization}
	For a set $E\subset \mathbb{R}^n $, the characteristic function $1_{E} $ is denoted by $1_{E}\left ( x \right )=1  $ for $x\in E$ and $1_{E}\left ( x \right )=0  $ otherwise. Let $E\subset \mathbb{R}^n $ be a Borel set of finite measure. The Schwarz symmetral of $E$, defined by $E^{\star }$, is a closed, centered Euclidean ball whose volume agrees with that of $E$, i.e.,
	\begin{eqnarray}
	E^{\star }=\left \{ x\in \mathbb{R}^{n}: \omega _{n}\left | x \right |^{n}\le \left | E \right |\right \}, \nonumber
	\end{eqnarray}
	where $\omega _{n}=\pi ^{\frac{n}{2}}/\Gamma \left ( 1+\frac{n}{2}\right )$ is $n$-dimensional  volume enclosed by the unit sphere $\mathbb{S}^{n-1}$ and $\Gamma$ is the gamma function.
	
	Let $f:\mathbb{R}^n \to \mathbb{R}$ be a non-negative measurable function with super-level sets $
	\left \{f\ge t  \right \}$ of finite measure for any $t>0$. The {\it layer cake formula} states that 
	\begin{eqnarray}\label{2c}
		f\left ( x \right )=\int_{0}^{\infty }1_{\left \{ f\ge t \right \} }\left ( x \right )dt\notag
	\end{eqnarray}
	for almost every $x\in \mathbb{R}^n$. The {\it symmetric decreasing rearrangement} of $f$, denoted by $f^{\star } $, is defined by
	\begin{eqnarray}
		f^{\star }\left ( x \right )=\int_{0}^{\infty }1_{\left \{ f\ge t \right \}^{\star }  }(x)dt   \notag
	\end{eqnarray}
	for $x\in \mathbb{R}^n$. Hence $f^{\star } $ is determined by the properties of being radially symmetric and having super-level sets that are balls of the same measure as the super-level sets of $f$.
	 Our results are built upon the Riese rearrangement inequality, available in full generality, for example, in \cite{BLL74}.
	\begin{theorem} \label{Rri} (Riesz’s rearrangement inequality). For $f,g,k:\mathbb{R}^n\to \mathbb{R}$ non-negative, measurable functions with super-level sets of finite measure,
		\begin{eqnarray*}
			\int_{\mathbb{R}^n}\int_{\mathbb{R}^n}f(x)k(x-y)g(y)dxdy\leq \int_{\mathbb{R}^n}\int_{\mathbb{R}^n}f^{\star }(x)k^{\star}(x-y)g^{\star}(y)dxdy.
		\end{eqnarray*}
	\end{theorem}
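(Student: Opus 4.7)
The plan is to reduce Riesz's inequality to a one-dimensional statement via the layer-cake formula together with Steiner symmetrization, then iterate to obtain the full symmetric decreasing rearrangement.

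First, I would apply the layer-cake representation to each of $f, g, k$ and use Fubini to rewrite the triple integral as
\begin{eqnarray*}
\int_{\mathbb{R}^n}\int_{\mathbb{R}^n} f(x) k(x-y) g(y)\,dxdy = \int_0^\infty\int_0^\infty\int_0^\infty I(\{f\ge s\},\{g\ge t\},\{k\ge u\})\,ds\,dt\,du,
\end{eqnarray*}
where $I(A,B,C) := \int_{\mathbb{R}^n}\int_{\mathbb{R}^n} 1_A(x) 1_C(x-y) 1_B(y)\,dxdy$. Since the super-level sets of $f^{\star}$ are the Schwarz symmetrals of those of $f$, and likewise for $g,k$, the inequality reduces to showing $I(A,B,C) \le I(A^{\star},B^{\star},C^{\star})$ for all Borel sets $A,B,C \subset \mathbb{R}^n$ of finite measure.

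Next I would establish the Steiner-symmetrization step: for any hyperplane $H$ through the origin, $I(A,B,C) \le I(A^H,B^H,C^H)$, where $A^H$ denotes the Steiner symmetral of $A$ with respect to $H$. Choosing coordinates with $H = \{x_n = 0\}$ and integrating first in $(x_n, y_n)$ for each fixed pair $(x', y')$ of transverse coordinates reduces the inequality to the one-dimensional case. For the one-dimensional Riesz inequality, I would approximate $A, B, C$ by finite unions of disjoint open intervals and run a continuous sliding deformation in which each constituent interval drifts toward $0$ at unit speed until a collision occurs. A direct bilinear computation shows that the derivative of $I_1$ along this deformation is non-negative, and each collision strictly reduces the total number of intervals, so after finitely many steps the configuration becomes the symmetric rearrangement; approximation then yields the inequality for arbitrary measurable sets of finite measure.

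Finally, to promote single-hyperplane symmetrizations to the full Schwarz rearrangement, I would invoke the classical fact that there exists a sequence of hyperplanes along which the iterated Steiner symmetrals of each of $A, B, C$ converge in measure to the corresponding Schwarz symmetrals. Continuity of $I$ under $L^1$-convergence of the indicator functions, which follows from dominated convergence because $1_C \le 1$ and $|A|, |B| < \infty$, then delivers the inequality. The main obstacle is the one-dimensional sliding argument: verifying the non-negativity of the derivative of $I_1$ under the deformation and managing the collision events both demand careful bookkeeping. The convergence of iterated Steiner symmetrizations in the final step, though classical, also rests on a non-trivial compactness argument in the space of measurable sets modulo null sets; this is precisely the kind of machinery collected in \cite{BLL74}.
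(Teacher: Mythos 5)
The paper does not actually prove this statement: it is quoted as a classical result with a pointer to Brascamp--Lieb--Luttinger \cite{BLL74}, so there is no in-paper argument to compare against. Your outline reproduces the standard route to that classical result --- layer-cake reduction to indicator functions, reduction of the $n$-dimensional inequality to the one-dimensional one by Steiner symmetrization with respect to a hyperplane, a continuous deformation argument for finite unions of intervals in dimension one, and iteration of Steiner symmetrizations along a universal sequence of directions converging to the Schwarz symmetral, together with the $L^1$-continuity of $I$. Structurally this is exactly the BLL/Lieb--Loss proof and all the reductions are sound.

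The one concrete flaw is the deformation you propose in the one-dimensional step. If each constituent interval drifts toward the origin at \emph{unit speed}, the derivative of $I_1$ is not non-negative in general. For single intervals centered at $a$, $b$, $c$ one has $I_1=J(a-b-c)$, where $J$ is the even, unimodal triple convolution of the centered indicators; take $a=3$, $b=c=1$ with all half-lengths $1/2$. Then $a-b-c=1$ while $\frac{d}{dt}(a-b-c)=-\mathrm{sign}(a)+\mathrm{sign}(b)+\mathrm{sign}(c)=+1$, so $|a-b-c|$ increases and $J$, hence $I_1$, strictly decreases at $t=0$. The deformation that actually works (and is the one used in \cite{BLL74}) contracts the centers linearly, $a_i(t)=(1-t)a_i$, so that every term $J_{ijk}\bigl((1-t)(a_i-b_j-c_k)\bigr)$ in the trilinear expansion over triples of constituent intervals is non-decreasing in $t$; collisions are then handled by merging the touching intervals and restarting the flow, which terminates after finitely many merges. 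With that correction your plan goes through; the remaining ingredients (the universal sequence of Steiner directions and the continuity of $I$ under symmetric-difference convergence, using $1_C\le 1$ and $|A|,|B|<\infty$) are standard.
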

	We will apply Burchard's criterion \cite{Burchard96} for determining the cases of equality in the  Riesz rearrangement inequality. 
	\begin{theorem} (Burchard)\label{2.b}. Let $A$, $B$ and $C$ be sets of finite positive measure in $\mathbb{R}^n$ and denote by $\alpha$, $\beta$ and $\gamma$ the radii of their Schwarz symmetrals $A^{\star}$, $B^{\star}$ and $C^{\star}$. For $|\alpha -\beta |<\gamma <\alpha +\beta$, there is equality in
		\begin{eqnarray*}
			\int_{\mathbb{R}^n}\int_{\mathbb{R}^n}1_{A}(y)1_{B}(x-y)1_{C}(x)dxdy\leq \int_{\mathbb{R}^n}\int_{\mathbb{R}^n}1_{A^{\star}}(y)1_{B^{\star}}(x-y)1_{C^{\star}}(x)dxdy
		\end{eqnarray*}
		if and only if, up to sets of measure zero,
		\begin{eqnarray*}
			A=a+\alpha D,\; B=b+\beta D,\; C=c+\gamma D,
		\end{eqnarray*}
		where $D$ is a centered ellipsoid, and $a$, $b$ and $c = a + b$ are vectors in $\mathbb{R}^n$.
	\end{theorem}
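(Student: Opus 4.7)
The plan is to reduce both inequalities in (\ref{1.e}) to the dual mixed volume inequality applied to the generalized fractional $L_p$ polar projection body $\Pi_p^{*,s}(f,h)$, combined with the symmetrization monotonicity proved in Section \ref{s5}. By the paper's identification (\ref{1.g}), the middle quantity is $n\omega_n^{(n+ps)/n}|\Pi_p^{*,s}(f,h)|^{-ps/n}$. Passing to polar coordinates on the difference $y-x = t\xi$ and using the definition of the gauge of $\Pi_p^{*,s}(f,h)$,
\begin{align*}
\mbox{LHS} = \int_{\mathbb{S}^{n-1}} \rho_{\Pi_p^{*,s}(f,h)}^{-ps}(\xi)\, d\xi = n\widetilde{V}_{-ps}(B^n, \Pi_p^{*,s}(f,h)).
\end{align*}
The first inequality in (\ref{1.e}) is then the dual mixed volume inequality (\ref{2a}) with $\alpha=-ps<0$, $K=B^n$, $L=\Pi_p^{*,s}(f,h)$, whose equality case (namely that $\Pi_p^{*,s}(f,h)$ be a centered ball) is triggered in particular when $f,h$ are radially symmetric, since then the gauge of $\Pi_p^{*,s}(f,h)$ is rotation invariant.

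For the second inequality, the radiality of $f^{\star}, h^{\star}$ forces $\Pi_p^{*,s}(f^{\star}, h^{\star})$ to be a centered ball, so (\ref{2a}) applied to $(f^{\star}, h^{\star})$ is saturated:
\begin{align*}
\mbox{RHS} = n\widetilde{V}_{-ps}(B^n, \Pi_p^{*,s}(f^{\star}, h^{\star})) = n\omega_n^{(n+ps)/n}|\Pi_p^{*,s}(f^{\star}, h^{\star})|^{-ps/n}.
\end{align*}
Hence it suffices to prove the volume comparison $|\Pi_p^{*,s}(f,h)| \le |\Pi_p^{*,s}(f^{\star}, h^{\star})|$. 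Applying the Section \ref{s5} monotonicity of $\widetilde{V}_{-ps}(K,\Pi_p^{*,s}(f,h))$ under symmetric decreasing rearrangement with $K=\Pi_p^{*,s}(f,h)$, followed by (\ref{2a}) on the right-hand side,
\begin{align*}
|\Pi_p^{*,s}(f,h)| = \widetilde{V}_{-ps}(\Pi_p^{*,s}(f,h), \Pi_p^{*,s}(f,h)) &\ge \widetilde{V}_{-ps}(\Pi_p^{*,s}(f,h), \Pi_p^{*,s}(f^{\star}, h^{\star})) \\
&\ge |\Pi_p^{*,s}(f,h)|^{(n+ps)/n}|\Pi_p^{*,s}(f^{\star}, h^{\star})|^{-ps/n},
\end{align*}
and rearranging yields the claim.

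The equality case of the second inequality requires simultaneous equality in (\ref{2a}) (forcing $\Pi_p^{*,s}(f,h)$ to be a dilate of $\Pi_p^{*,s}(f^{\star}, h^{\star})$, hence a centered ball) and in the Section \ref{s5} monotonicity. The $GL(n)$-affine invariance of (\ref{1.g}) recorded after the theorem lets one reduce to the case $\Pi_p^{*,s}(f,h) = \Pi_p^{*,s}(f^{\star}, h^{\star})$, at which point the equality case of Section \ref{s5}, ultimately traced back to Burchard's criterion (Theorem \ref{2.b}) applied during a functional Steiner symmetrization, forces $f$ and $h$ to be joint translates of $f^{\star}, h^{\star}$; undoing the $GL(n)$ reduction yields $f = f^{\star}(\phi x + x_0)$, $h = h^{\star}(\phi x + x_0)$. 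The main obstacle is entirely housed in Section \ref{s5}: isolating the right functional Steiner symmetrization for the two-function gauge of $\Pi_p^{*,s}(f,h)$, showing that each planar rearrangement step does not increase $\widetilde{V}_{-ps}(K,\Pi_p^{*,s}(f,h))$ via Riesz's inequality (Theorem \ref{Rri}), and carrying out the Burchard equality analysis in this functional setting is where the substantive work lies. Given that lemma, Theorem \ref{1.a} follows from two applications of the dual mixed volume inequality, as above.
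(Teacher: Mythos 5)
The statement you were asked to prove is Burchard's characterization of equality in the Riesz rearrangement inequality (Theorem \ref{2.b}), but your proposal is a proof sketch of the paper's main result, Theorem \ref{1.a}. Nowhere do you address the actual claim: that for sets $A$, $B$, $C$ of finite positive measure whose Schwarz symmetrals have radii satisfying $|\alpha-\beta|<\gamma<\alpha+\beta$, equality in $\int_{\mathbb{R}^n}\int_{\mathbb{R}^n}1_{A}(y)1_{B}(x-y)1_{C}(x)\,dx\,dy\leq \int_{\mathbb{R}^n}\int_{\mathbb{R}^n}1_{A^{\star}}(y)1_{B^{\star}}(x-y)1_{C^{\star}}(x)\,dx\,dy$ forces $A$, $B$, $C$ to be, up to null sets, translates of dilates of a single centered ellipsoid $D$ with translation vectors satisfying $c=a+b$. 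Worse, your argument explicitly \emph{uses} this statement as an input (you invoke ``Burchard's criterion (Theorem \ref{2.b})'' in your equality analysis), so as a proof of Theorem \ref{2.b} itself the proposal is circular: the entire content of the theorem is missing.

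For the record, the paper does not prove Theorem \ref{2.b} either; it is quoted verbatim from Burchard \cite{Burchard96}, so there is no internal proof to compare against. An actual proof is a substantial standalone piece of analysis requiring tools that appear nowhere in your outline: one reduces to the one-dimensional case, where the strict admissibility condition $|\alpha-\beta|<\gamma<\alpha+\beta$ forces the extremal sets to be intervals; one then analyzes when Steiner symmetrization in a given direction leaves the triple convolution functional unchanged, and characterizes the higher-dimensional extremals as homothets of a common convex body which the symmetry constraints in all directions force to be an ellipsoid. If your intent was instead to prove Theorem \ref{1.a}, your outline is broadly consistent with the paper's Section \ref{s6} (two applications of the dual mixed volume inequality (\ref{2a}) combined with the rearrangement monotonicity of Section \ref{s5}), but that is a different statement from the one assigned, and a review of that argument would have to be conducted against the proof of Theorem \ref{1.a}, not of Theorem \ref{2.b}.
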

	
	\subsection{Anisotropic fractional Sobolev norms}
	J. Haddad and M. Ludwig \cite{JM24} introduced the definition of anisotropic
	fractional ${L}_{p}$ Sobolev norm of function with respect to star bodies, more information for anisotropic fractional Sobolev norms is provided by M. Ludwig \cite{Ludwig14}.
	
	Let $0 < s <1$ and $p\ge1$. For $K \subset \mathbb{R}^{n}$ a star body and $(f,h)\in{W}^{s,p}(\mathbb{R}^{n},\mathbb{R}^{2})$, we define the {\it anisotropic fractional ${L}_{p}$ Sobolev norm} of $f$ and $h$ with respect to $K$ by
	\begin{eqnarray}\label{2.4a}
		\int_{\mathbb{R}^{n}}\int_{\mathbb{R}^{n}}\frac{{|f(x)-h(y)|}^{p}}{{\left \| x-y\right\|}_{K}^{n+ps}}dxdy.
	\end{eqnarray}
	In the case  $f$=$h$, it is consistent with the definition of  anisotropic fractional $L_p$ Sobolev norms introduced in \cite{Ludwig14} for $K$ a convex body (also, see \cite{Ludwig1496}). For $K = {B}^{n}$, the
	Euclidean unit ball, the classical $s$-fractional ${L}_{p}$ Sobolev norm of $f$ is obtained. The
	limit as $s\to{1}^{-}$ was characterized in \cite{JHP01} in the Euclidean setting and in \cite{Ludwig14} in the
	anisotropic case. We will also consider the following asymmetric versions of (\ref{2.4a}),
	\begin{eqnarray}
		\int_{\mathbb{R}^{n}}\int_{\mathbb{R}^{n}}\frac{{(f(x)-h(y))}^{p}_{+}}{{\left \| x-y\right\|}_{K}^{n+ps}}dxdy,\;\;\int_{\mathbb{R}^{n}}\int_{\mathbb{R}^{n}}\frac{{(f(x)-h(y))}^{p}_{-}}{{\left \| x-y\right\|}_{K}^{n+ps}}dxdy,
	\end{eqnarray}
	where ${a_{+} = \max\{a,0\}}$ and ${a_{-} = \max\{-a,0\}}$ for $a \in \mathbb{R}$.
	\section{Generalized fractional ${L}_{p}$ polar projection bodies}\label{s3}
	Let $0 < s < 1$ and $1 < p < n/s$. For  measurable functions $f,h : \mathbb{R}^{n} \to \mathbb{R}$, define the
	generalized $s$-fractional ${L}_{p}$ polar projection body ${\Pi}_{p}^{*,s}(f,h)$ as the star-shaped set given by the gauge function
	\begin{eqnarray}\label{3a}
		{\left \| \xi \right \|}_{{\Pi}_{p}^{*,s}(f,h)}^{ps}=\int_{0}^{\infty }{t }^{-ps-1} \int_{\mathbb{R}^{n}}{|f(x+t\xi) -h(x)|}^{p}dxdt
	\end{eqnarray}
	for $\xi\in\mathbb{R}^{n}\setminus \{0\}$. 
	Since  ${\left \| \cdot \right \|}_{{\Pi}_{p}^{*,s}(f,h)}$ is a one-homogeneous function on $\mathbb{R}^{n}$, we can define  ${\left \| 0 \right \|}_{{\Pi}_{p}^{*,s}(f,h)}=0$.
	Let $K\subset \mathbb{R}^{n}$ be a star body. For $(f,h)\in{W}^{s,p}(\mathbb{R}^{n},\mathbb{R}^{2})$, by spherical coordinates, Fubini's theorem and (\ref{3a}), we have
	\begin{eqnarray}
		\int_{\mathbb{R}^{n}}\int_{\mathbb{R}^{n}}\frac{{|f(x)-h(y)|}^{p}}{{\left \| x-y\right\|}_{K}^{n+ps}}dxdy&=&\int_{\mathbb{R}^{n}}\int_{\mathbb{R}^{n}}\frac{{|f(y+z)-h(y)|}^{p}}{{\left \| z\right \|}_{K }^{n+ps}}dzdy\nonumber\\
		&=&\int_{\mathbb{R}^{n}}\int_{\mathbb{S}^{n-1}}\int_{0}^{\infty }\frac{{|f(y+t\xi )-h(y)|}^{p}{t}^{n-1}}{{\left \| t\xi\right \|}_{K }^{n+ps}}dtd\xi dy\nonumber\\
		&=&\int_{\mathbb{S}^{n-1}}{{\rho}_{K} (\xi)}^{n+ps}\int_{0}^{\infty }{t }^{-ps-1}\int_{\mathbb{R}^{n}}{|f(y+t\xi )-h(y)|}^{p}dydtd\xi\nonumber\\
		&=&\int_{\mathbb{S}^{n-1}}{{\rho}_{K} (\xi)}^{n+ps}{{\rho}_{{\Pi}_{p}^{*,s}(f,h)} (\xi)}^{-ps}d\xi.\label{e34}
	\end{eqnarray}
	Hence,
	\begin{eqnarray}\label{3b}
		\int_{\mathbb{R}^{n}}\int_{\mathbb{R}^{n}}\frac{\left | f\left ( x \right )-h\left ( y \right )   \right |^{p}}{\left \| x-y \right \|^{n+ps} _{K} }dxdy=n\widetilde{V}_{-ps}\left ( K, {\textstyle \Pi_{p}^{\ast ,s}}\left ( f,h \right ) \right ). 
	\end{eqnarray}
	
	Next, we establish basic properties of generalized fractional ${L}_{p}$ polar projection bodies.
	\begin{proposition}\label{3.a}
		For non-zero functions $(f,h)\in{W}^{s,p}(\mathbb{R}^{n},\mathbb{R}^{2})$ and $f\in {W}^{s,p}(\mathbb{R}^{n})$, the set ${\Pi}_{p}^{*,s}(f,h)$ is a star body with the origin in its interior. Moreover, there is $c>0$ depending only on $f,h$ and $p$ such that ${\Pi}_{p}^{*,s}(f,h)\subset c{B}^{n}$ for every $s\in(0,1)$.	 
	\end{proposition}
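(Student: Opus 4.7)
The plan is to verify three things for each $s \in (0,1)$: that $\xi \mapsto \|\xi\|_{\Pi_p^{*,s}(f,h)}^{ps}$ is a finite, positive, $(ps)$-homogeneous function on $\mathbb{R}^n \setminus \{0\}$; that it is bounded above on $\mathbb{S}^{n-1}$ (so the origin lies interior to $\Pi_p^{*,s}(f,h)$); and that it is bounded below on $\mathbb{S}^{n-1}$ by a constant controlled uniformly in $s$, which forces the uniform containment in $cB^n$. Homogeneity comes for free: the change of variable $t \mapsto \lambda t$ in (\ref{3a}) yields $\|\lambda\xi\|^{ps}_{\Pi_p^{*,s}(f,h)} = \lambda^{ps}\|\xi\|^{ps}_{\Pi_p^{*,s}(f,h)}$, so the gauge is positively one-homogeneous and $\Pi_p^{*,s}(f,h)$ is its unit sublevel set.

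For the upper bound on $\mathbb{S}^{n-1}$ I would split the defining integral at $t=1$. For the tail $t \geq 1$, use $|f(x+t\xi)-h(x)|^p \leq 2^{p-1}(|f(x+t\xi)|^p + |h(x)|^p)$ and integrate in $x$ first to bound this contribution by $2^{p-1}(\|f\|_p^p + \|h\|_p^p)/(ps)$, uniformly in $\xi$. For the head $0 < t < 1$, apply the splitting $|f(x+t\xi)-h(x)|^p \leq 2^{p-1}(|f(x+t\xi)-f(x)|^p + |f(x)-h(x)|^p)$; the first piece will be controlled via the $W^{s,p}(\mathbb{R}^n)$ assumption on $f$, and the second piece is controlled by leveraging the $W^{s,p}(\mathbb{R}^n,\mathbb{R}^2)$ integrability of $(f,h)$ near the diagonal. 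This delivers a finite, $\xi$-uniform upper bound on the gauge and places the origin in the interior of $\Pi_p^{*,s}(f,h)$.

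For the uniform lower bound I would exploit that $f$ and $h$ are non-zero $L^p$ functions. Choose $R=R(f,h,p)>0$ so that the tails $\int_{|x|>R}|f|^p$ and $\int_{|x|>R}|h|^p$ are small compared to $\|f\|_p^p$ and $\|h\|_p^p$. For $t \geq T_0 := 2R$ and any $\xi \in \mathbb{S}^{n-1}$, the bulk of $f(\cdot+t\xi)$ disjoins from that of $h(\cdot)$ inside $B_R$, so a reverse triangle inequality yields $\int_{\mathbb{R}^n}|f(x+t\xi)-h(x)|^p\,dx \geq c_0(f,h,p) > 0$ independent of $\xi$ and of $t \geq T_0$. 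Integrating in $t$ gives $\|\xi\|_{\Pi_p^{*,s}(f,h)}^{ps} \geq c_0\, T_0^{-ps}/(ps)$, and a short check shows that $u \mapsto T_0(u/c_0)^{1/u}$ remains bounded on $u = ps \in (0,p)$, providing a constant $c = c(f,h,p)$ with $\rho_{\Pi_p^{*,s}(f,h)}(\xi) \leq c$ for every $\xi$ and every $s \in (0,1)$.

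Local Lipschitz continuity of the radial function on $\mathbb{R}^n \setminus \{0\}$ then follows from continuity of translations in $L^p$, which makes $\xi \mapsto \int|f(x+t\xi)-h(x)|^p\,dx$ continuous for each $t>0$, combined with a dominated-convergence argument in $t$ using the bounds established above as majorants; together with one-homogeneity this upgrades to local Lipschitz control. The hard part will be the head of the integral in the upper-bound step: the naive triangle-inequality split produces the factor $\|f-h\|_p^p \int_0^1 t^{-ps-1}\,dt$, which diverges unless $\|f-h\|_p=0$. Controlling this residual—either by the observation that $(f,h) \in W^{s,p}(\mathbb{R}^n,\mathbb{R}^2)$ essentially forces $f=h$ a.e.\ via a diagonal-blow-up argument on the defining double integral, or by a more refined splitting that leverages both $W^{s,p}$ hypotheses simultaneously—is the technical hurdle on which the rest of the argument rests.
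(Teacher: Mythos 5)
Your lower-bound step (boundedness of $\Pi_{p}^{*,s}(f,h)$, uniformly in $s$) is essentially the paper's argument: concentrate the masses of $f$ and $h$ in $rB^{n}$, note that for $t>2r$ the translate $f(\cdot+t\xi)$ and $h$ have almost disjoint bulk, apply the reverse triangle inequality, and integrate $t^{-ps-1}$ over the tail. That part is fine. The homogeneity remark is also fine.

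The genuine gap is exactly where you flag it, and the idea you are missing is the paper's central device for the "origin in the interior" step. A direct pointwise bound of $\|\xi\|^{ps}_{\Pi_{p}^{*,s}(f,h)}$ for \emph{every} $\xi$ cannot work via splitting at $t=1$: even after your (correct) observation that finiteness of the double integral forces $f=h$ a.e.\ — so the $\|f-h\|_{p}^{p}\int_{0}^{1}t^{-ps-1}dt$ term disappears — the remaining head term $\int_{0}^{1}t^{-ps-1}\|f(\cdot+t\xi)-f\|_{p}^{p}\,dt$ is only known to be finite for \emph{almost every} direction $\xi$, since the hypothesis $f\in W^{s,p}(\mathbb{R}^{n})$ controls only the spherical average $\int_{\mathbb{S}^{n-1}}\|\xi\|^{ps}_{\Pi_{p}^{*,s}(f,f)}\,d\xi$. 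The paper's resolution is: (i) use the identity $\int_{\mathbb{S}^{n-1}}\|\xi\|^{ps}_{\Pi_{p}^{*,s}(f,h)}\,d\xi=\int\int |f(x)-h(y)|^{p}|x-y|^{-n-ps}dxdy<\infty$ (and likewise for $(f,f)$) to find a positive-measure set $D\subset\mathbb{S}^{n-1}$ on which both gauges are at most $r$, containing a basis $\xi_{1},\dots,\xi_{n}$; (ii) prove the quasi-triangle inequality $\|\xi+\eta\|^{ps}_{\Pi_{p}^{*,s}(f,h)}\le 2^{p-1}\bigl(\|\eta\|^{ps}_{\Pi_{p}^{*,s}(f,f)}+\|\xi\|^{ps}_{\Pi_{p}^{*,s}(f,h)}\bigr)$, which propagates finiteness from that basis to a bound $\|x\|_{\Pi_{p}^{*,s}(f,h)}\le c_{1}\|x\|$ for all $x$. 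Without some such propagation mechanism your upper bound does not close.

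The continuity step has a related problem: your dominated-convergence sketch needs a $t$-integrable majorant for $t^{-ps-1}\|f(\cdot+t\xi')-h\|_{p}^{p}$ valid uniformly for $\xi'$ near $\xi$, and near $t=0$ no such majorant is available from your estimates. The paper instead reuses the quasi-triangle idea with a weighted H\"older inequality (parameter $r=\|\eta\|^{s/2}$) to get explicit two-sided bounds of the form $\|\xi+\eta\|^{ps}\le(1+\|\eta\|^{\frac{s}{2}\frac{p}{p-1}})^{p-1}(c\|\eta\|^{ps/2}+\|\xi\|^{ps})$ and its reverse, from which continuity of the gauge follows as $\eta\to 0$. (Note that this yields continuity, not the Lipschitz control you assert.) In short: correct skeleton and a correct lower bound, but the key lemma — the mixed quasi-triangle inequality combined with the a.e.-finiteness/basis-selection argument — is absent, and it is precisely what carries both the interior-point claim and the continuity claim.
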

	\begin{proof}
		Firstly, we show that ${\Pi}_{p}^{*,s}(f,h)$ is bounded. For fixed $N>1$, we take $r>1$ sufficiently large so that ${\left \|f \right \|}_{{L}^{p}(r{B}^{n})} \ge \frac{N-1}{N}{\left \|f \right \|}_{p}$, ${\left \|h \right \|}_{{L}^{p}(r{B}^{n})} \ge \frac{N-1}{N}{\left \|h \right \|}_{p}$ and easily see that for $t>2r$,
		\begin{eqnarray*}
			{\left \|f(\cdot +t\xi)-h(\cdot ) \right \|}_{p}
			&\ge&{\left \|f(\cdot +t\xi)-h(\cdot ) \right \|}_{{L}^{p}(r{B}^{n}-t\xi)}\nonumber\\
			&=& {\left \|f(\cdot )-h(\cdot-t\xi ) \right \|}_{{L}^{p}(r{B}^{n})}\nonumber\\
			&\ge&{\left \|f \right \|}_{{L}^{p}(r{B}^{n})}-{\left \|h(\cdot-t\xi )  \right \|}_{{L}^{p}(r{B}^{n})}\nonumber\\
			&\ge&\frac{N-1}{N}{\left \|f \right \|}_{p}-\frac{1}{N}{\left \|h \right \|}_{p}.\nonumber\\
		\end{eqnarray*} 
		Hence,
		\begin{eqnarray*}
			\int_{0}^{\infty }{t }^{-ps-1} \int_{\mathbb{R}^{n}}{|f(x+t\xi) -h(x)|}^{p}dxdt
			&\ge&\int_{0}^{\infty }{t }^{-ps-1}{\left(\frac{N-1}{N}{\left \|f \right \|}_{p}-\frac{1}{N}{\left \|h \right \|}_{p} \right)}^{p}dt\nonumber\\
			&\ge&{\left(\frac{N-1}{N}{\left \|f \right \|}_{p}-\frac{1}{N}{\left \|h \right \|}_{p}\right) }^{p}\int_{r}^{\infty }{t }^{-ps-1}dt\nonumber\\
			&=&{\left(\frac{N-1}{N}{\left \|f \right \|}_{p}-\frac{1}{N}{\left \|h \right \|}_{p}\right) }^{p}\frac{{r}^{-ps}}{ps}\ge c^{ps} ,
		\end{eqnarray*} 
		which implies that ${\Pi}_{p}^{*,s}(f,h)\subset c{B}^{n}$ for $c>0$ independent of $s$.
		
		Next, we show that ${\Pi}_{p}^{*,s}(f,h)$ contains the origin in its interior. First observe that for $\xi$, $\eta\in{\mathbb{R}}^{n}$, by the triangle inequality, H\"older inequality and a change of variable,
		\begin{eqnarray}\label{3c}
			{\left \|\xi+\eta \right \|}^{ps}_{{\Pi}_{p}^{*,s}(f,h)}
			&=& \int_{0}^{\infty }{t }^{-ps-1}{\left \|f(\cdot +t\xi+t\eta)-h(\cdot ) \right \|}^{p}_{p}dt\nonumber\\
			&=& \int_{0}^{\infty }{t }^{-ps-1}{\left \|f(\cdot +t\xi+t\eta)-f\left ( \cdot +t\xi  \right )+f\left ( \cdot +t\xi  \right ) -h(\cdot ) \right \|}^{p}_{p}dt\nonumber\\
			&\le&\int_{0}^{\infty }{t }^{-ps-1}\left ( {\left \|f(\cdot +t\xi+t\eta)-f\left ( \cdot +t\xi  \right )\right\|_{p} +\left\|f\left ( \cdot +t\xi  \right ) -h(\cdot ) \right \|_{p} } \right )^{p} dt\nonumber\\
			&\le&\int_{0}^{\infty }{t }^{-ps-1}2^{p-1} \left ( {\left \|f(\cdot +t\xi+t\eta)-f\left ( \cdot +t\xi  \right )\right\|^{p} _{p} +\left\|f\left ( \cdot +t\xi  \right ) -h(\cdot ) \right \|^{p} _{p} } \right ) dt\nonumber\\
			&=& {2}^{p-1}\left({\left \|\eta \right \|}^{ps}_{{\Pi}_{p}^{*,s}(f,f)}+{\left \|\xi \right \|}^{ps}_{{\Pi}_{p}^{*,s}(f,h)}\right)\nonumber\\
			&\le&{2}^{p-1}\left({\left \|\eta \right \|}^{ps}_{{\Pi}_{p}^{*,s}(f,f)}+{\left \|\xi  \right \|}^{ps}_{{\Pi}_{p}^{*,s}(f,f)}+{\left \|\xi \right \|}^{ps}_{{\Pi}_{p}^{*,s}(f,h)}+{\left \|\eta  \right \|}^{ps}_{{\Pi}_{p}^{*,s}(f,h)}\right).
		\end{eqnarray}
		Using the relation (\ref{e34}) with $K = {B}^{n}$, we obtain
		\begin{eqnarray*}
			\int_{{\mathbb{R}}^{n}}\int_{{\mathbb{R}}^{n}}\frac{{|f(x)-h(y)|}^{p}}{{|x-y|}^{n+ps}}dxdy=\int_{{\mathbb{S}}^{n-1}}{\left \|\xi\right \|}^{ps}_{{\Pi}_{p}^{*,s}(f,h)}d\xi
		\end{eqnarray*} 
		since  $(f,h)\in{W}^{s,p}({\mathbb{R}}^{n},{\mathbb{R}}^{2})$, the right side of the above equality  is finite. Since $f\in {W}^{s,p}(\mathbb{R}^{n})$, $\int_{{\mathbb{S}}^{n-1}}{\left \|\xi\right \|}^{ps}_{{\Pi}_{p}^{*,s}(f,f)}d\xi$ is bounded as well. We pick $r > 0$ sufficiently large so that the
		set $$D = \{\xi\in{\mathbb{S}}^{n-1}:{\left \|\xi\right \|}^{ps}_{{\Pi}_{p}^{*,s}(f,h)}<r,\;\;{\left \|\xi\right \|}^{ps}_{{\Pi}_{p}^{*,s}(f,f)}<r \}$$ has positive $(n-1)$-dimensional Hausdorff
		measure with a basis $\{{\xi}_{1},\dots,{\xi}_{n}\} \subset D$ of ${\mathbb{R}}^{n}$. If necessary, we perform a linear
		transformation on $f$ and $h$, without loss of generality, making ${\xi}_{i} = {e}_{i}$, the orthonormal basis vectors. For every $x\in{\mathbb{R}}^{n}$, writing $x =\sum {x}_{i}{e}_{i}$ and using (\ref{3c}), we have
		\begin{eqnarray}\label{3e}
			{\left \|x \right \|}_{{\Pi}_{p}^{*,s}(f,h)}
			&\le&\left ( {2}^{p-1 }\displaystyle\sum_{i=1}^{n}{|{x}_{i}|}^{ps}{\left \|{e}_{i}\right \|}^{ps}_{{\Pi}_{p}^{*,s}(f,h) } + {2}^{p-1}\displaystyle\sum_{i=1}^{n}{|{x}_{i}|}^{ps}{\left \|{e}_{i}\right \|}^{ps}_{{\Pi}_{p}^{*,s}(f,f) } \right )^{\frac{1}{ps} } \nonumber\\
			&\le& c_{1}\|x\|,
		\end{eqnarray}
		with $c_{1}> 0$ being independent of $x$, it follows that ${\Pi}_{p}^{*,s}(f,h)$ has the origin as interior point. Similarly, there exists $c_{2}>0$ independent of $x$ such that
		\begin{eqnarray}\label{e3.5}
			{\left \|x \right \|}_{{\Pi}_{p}^{*,s}(f,f)}
			\le c_{2}\|x\|.
		\end{eqnarray}
		Finally, we show that ${||\cdot ||}_{{\Pi}_{p}^{*,s}(f,h)}$ is continuous. For $\xi$, $\eta\in{\mathbb{R}}^{n} \setminus \{0\}$ , by (\ref{3a}), the triangle
		inequality and (\ref{e3.5}), we have
		\begin{eqnarray*}
			&&{\left \|\xi+\eta \right \|}^{ps}_{{\Pi}_{p}^{*,s}(f,h)}\nonumber\\
			&=& \int_{0}^{\infty }{t }^{-ps-1}{\left \|f(\cdot +t\xi+t\eta)-h(\cdot ) \right \|}^{p}_{p}dt\nonumber\\
			&=& \int_{0}^{\infty }{t }^{-ps-1}{\left \|f(\cdot +t\xi+t\eta)-f\left ( \cdot +t\xi  \right )+f\left ( \cdot +t\xi  \right ) -h(\cdot ) \right \|}^{p}_{p}dt\nonumber\\
			&\le&\int_{0}^{\infty }{t }^{-ps-1}\left ( {\left \|f(\cdot +t\xi+t\eta)-f\left ( \cdot +t\xi  \right )\right\|_{p} +\left\|f\left ( \cdot +t\xi  \right ) -h(\cdot ) \right \|_{p} } \right )^{p} dt\nonumber\\
			&\le&{(1+{\|\eta\|}^{\frac{s}{2}\frac{p}{p-1} } )}^{p-1}\int_{0}^{\infty }{t }^{-ps-1}\left ({\|\eta\|}^{-\frac{ps}{2} }{\left \|f\left ( \cdot +t\xi+t\eta  \right )-f\left ( \cdot +t\xi  \right )  \right \|}^{p}_{p}+{\left \|f\left ( \cdot +t\xi  \right )-h\left ( \cdot  \right )   \right \|}^{p}_{p}\right )dt\nonumber\\
			&=&{(1+{\|\eta\|}^{\frac{s}{2}\frac{p}{p-1} })}^{p-1}({\|\eta\|}^{-\frac{ps}{2} }	{\left \|\eta \right \|}^{ps}_{{\Pi}_{p}^{*,s}(f,f)}+{\left \|\xi \right \|}^{ps}_{{\Pi}_{p}^{*,s}(f,h)})\nonumber\\
			&\le& {(1+{\|\eta\|}^{\frac{s}{2}\frac{p}{p-1} })}^{p-1}(c_{2}{\|\eta\|}^{\frac{ps}{2} }+{\left \|\xi \right \|}^{ps}_{{\Pi}_{s}^{*,s}(f,h)}),
		\end{eqnarray*}
		where we used the inequality 
		\begin{eqnarray*}
			a+b\le {(1 + {r}^{p/(p-1)})}^{(p-1)/p}{({({r}^{-1}a)}^{p}  +{b}^{p})}^{1/p}
		\end{eqnarray*} 
		for $a, b, r > 0$, which is a consequence of H\"older's inequality.
		We obtain
		\begin{eqnarray}\label{3f}
			{\left \|\xi+\eta \right \|}^{ps}_{{\Pi}_{p}^{*,s}(f,h)}\le{(1+{\|\eta\|}^{\frac{s}{2}\frac{p}{p-1} })}^{p-1}(c_{2}{\|\eta\|}^{\frac{ps}{2} }+{\left \|\xi \right \|}^{ps}_{{\Pi}_{s}^{*,s}(f,h)}).
		\end{eqnarray}
		Using inequality (\ref{3f}) for the vectors $\xi+\eta$ and $-\eta$, we derive
		\begin{eqnarray*}
			{\left \|\xi \right \|}^{ps}_{{\Pi}_{p}^{*,s}(f,h)}&=&{\left \|\xi+\eta-\eta \right \|}^{ps}_{{\Pi}_{p}^{*,s}(f,h)}\nonumber\\
			&\le&{(1+{\|\eta\|}^{\frac{s}{2}\frac{p}{p-1} })}^{p-1}(c_{2}{\|-\eta\|}^{\frac{ps}{2} }+{\left \|\xi+\eta \right \|}^{ps}_{{\Pi}_{s}^{*,s}(f,h)})
		\end{eqnarray*} 
		which implies
		\begin{eqnarray}\label{3g}
			{\left \|\xi+\eta \right \|}^{ps}_{{\Pi}_{p}^{*,s}(f,h)}\ge{(1+{\|\eta\|}^{\frac{s}{2}\frac{p}{p-1} })}^{1-p} {\left \|\xi \right \|}^{ps}_{{\Pi}_{p}^{*,s}(f,h)}-c_{2}{\|\eta\|}^{\frac{ps}{2} }.
		\end{eqnarray}
		  From (\ref{3f}) and (\ref{3g}), we deduce the continuity of ${||\cdot ||}_{{\Pi}_{p}^{*,s}(f,h)}$.
	\end{proof}
	
	\section{Fractional asymmetric $L_p$ polar projection bodies}\label{s4}
	Let $0<s<1$ and $1< p< \frac{n}{s}$. For  measurable functions $f,h:\mathbb{R}^{n} \to \mathbb{R}$, define the {\it generalized asymmetric $s$-fractional $L_{p} $ polar projection bodies} $ {\textstyle \Pi_{p,+}^{*,s}}\left ( f,h \right )$ and $ {\textstyle \Pi_{p,-}^{*,s}}\left ( f,h \right )$ as the star-shaped sets given by the gauge functions
	\begin{eqnarray}\label{gf}
		\left \| \xi  \right \|_{{\textstyle \Pi_{p,\pm }^{*,s}}\left ( f,h \right )
		} ^{ps}=\int_{0}^{\infty }t^{-ps-1}\int_{\mathbb{R}^{n} }\left ( f\left ( x+t\xi  \right )-h\left ( x \right )   \right )_{\pm } ^{p}dxdt    
	\end{eqnarray}
	for $\xi \in \mathbb{R}^{n}$. We have ${\textstyle \Pi_{p,-}^{*,s}}\left ( f,h \right )={\textstyle \Pi_{p,+}^{*,s}}\left ( -f,-h \right )=-{\textstyle \Pi_{p,+}^{*,s}}\left ( h,f \right )$ and sate our results just for ${\textstyle \Pi_{p,+}^{*,s}}\left ( f,h \right )$. We remark that $
	\left \| \cdot   \right \|_{{\textstyle \Pi_{p,\pm }^{*,s}}\left ( f,h \right )
	} ^{ps}$ is a one-homogeneous function on $\mathbb{R}^{n} $, analogous to the symmetric case. Also, note that
	\begin{eqnarray}\label{4.1}
		\left \| \xi  \right \|_{{\textstyle \Pi_{p }^{*,s}}\left ( f,h \right )} ^{ps}=\left \| \xi  \right \|_{{\textstyle \Pi_{p,+ }^{*,s}}\left ( f,h \right )} ^{ps}+\left \| \xi  \right \|_{{\textstyle \Pi_{p,- }^{*,s}}\left ( f,h \right )} ^{ps}
	\end{eqnarray}
	for $\xi \in \mathbb{R}^{n}$.
	
	Let $K\subset \mathbb{R}^{n}$ be a star body and $\left ( f,h \right )\in W^{s,p}\left ( \mathbb{R}^{n},\mathbb{R}^{2}   \right ) $. As in (\ref{3b}), we obtain that
	\begin{eqnarray}\label{4.b}
		\int_{\mathbb{R}^{n} }\int_{\mathbb{R}^{n} }\frac{\left ( f\left ( x \right )-h\left ( y \right )   \right )_{+} ^{p}  }{\left \| x-y \right \|_{K} ^{n+ps}  }dxdy=n\widetilde{V }_{-ps}\left ( K,{\textstyle \Pi_{p,+}^{*,s}}\left ( f,h \right ) \right ). 
	\end{eqnarray}
	
	In the following proposition, we prove the basic properties of generalized $s$-fractional  $L_{p}$ polar projection bodies with asymmetry.
	\begin{proposition}
		For $p>1$ and non-zero $\left ( f,h \right )\in W^{s,p}\left ( \mathbb{R}^{n},\mathbb{R}^{2}\right )$ and $f\in {W}^{s,p}(\mathbb{R}^{n})$, the set ${\textstyle \Pi_{p,+}^{*,s}}\left ( f,h \right )$ is a star body with the origin in its interior. Moreover, there is $c>0$ depending only on $f$, $h$ and $p$ such that ${\textstyle \Pi_{p,+}^{*,s}}\left ( f,h \right )\subseteq cB^{n}$ for every $s\in \left ( 0,1 \right )$.
	\end{proposition}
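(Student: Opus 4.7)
The plan is to follow the blueprint of Proposition~\ref{3.a}, with $(f(\cdot+t\xi)-h)_+^p$ replacing $|f(\cdot+t\xi)-h|^p$ throughout. The immediate payoff of the decomposition (\ref{4.1}) is
\begin{eqnarray*}
\|\xi\|_{\Pi_{p,+}^{*,s}(f,h)}^{ps}\le \|\xi\|_{\Pi_p^{*,s}(f,h)}^{ps},
\end{eqnarray*}
which, combined with the upper bound (\ref{3e}) from Proposition~\ref{3.a}, gives $\|x\|_{\Pi_{p,+}^{*,s}(f,h)}\le c_1\|x\|$ at no extra cost. In particular the origin lies in the interior of $\Pi_{p,+}^{*,s}(f,h)$ and the gauge is locally bounded above, so these parts of the statement are free.

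The nontrivial step is the uniform lower bound producing $\Pi_{p,+}^{*,s}(f,h)\subset cB^n$. Mirroring the first computation in the proof of Proposition~\ref{3.a}, I would fix a large $N>1$ and pick $r>1$ so that $\|f\|_{L^p(rB^n)}\ge \frac{N-1}{N}\|f\|_p$ and $\|h\|_{L^p((rB^n)^c)}\le \frac{1}{N}\|h\|_p$. For $\xi\in\mathbb{S}^{n-1}$ and $t>2r$, restrict the inner integral in (\ref{gf}) to $-t\xi+rB^n$; on this set $x+t\xi\in rB^n$ while $x\in(rB^n)^c$. The new ingredient is the pointwise inequality $(u-v)_-=(v-u)_+\le v$, valid whenever $u,v\ge 0$, which together with the identity $\|u-v\|_p^p=\|(u-v)_+\|_p^p+\|(u-v)_-\|_p^p$ yields
\begin{eqnarray*}
\|(f(\cdot+t\xi)-h)_+\|_{L^p(-t\xi+rB^n)}^p\ge \|f(\cdot+t\xi)-h\|_{L^p(-t\xi+rB^n)}^p-\|h\|_{L^p(-t\xi+rB^n)}^p.
\end{eqnarray*}
Applying the ordinary triangle inequality to the first term on the right and invoking the choice of $r$ reduces this to a positive constant depending only on $\|f\|_p$, $\|h\|_p$, $p$ and $N$, once $N$ is chosen large enough that $\frac{N-1}{N}\|f\|_p>\frac{2}{N}\|h\|_p$. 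Multiplying by $t^{-ps-1}$ and integrating $t$ over $(2r,\infty)$ then produces $\|\xi\|_{\Pi_{p,+}^{*,s}(f,h)}^{ps}\ge c^{ps}$, uniformly in $\xi\in\mathbb{S}^{n-1}$ and $s\in(0,1)$, hence $\Pi_{p,+}^{*,s}(f,h)\subseteq cB^n$.

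The continuity of $\|\cdot\|_{\Pi_{p,+}^{*,s}(f,h)}$ transcribes the calculation (\ref{3c})--(\ref{3g}) of Proposition~\ref{3.a}, with the absolute-value triangle inequality replaced by the pointwise sub-additivity $(a+b)_+\le a_++b_+$; the subsequent H\"older step is unchanged, and together with the upper bound just established one recovers the asymmetric analogues of (\ref{3f}) and (\ref{3g}). The main obstacle I foresee is the pointwise estimate $(u-v)_-\le v$: this is where the non-negativity of $f$ and $h$ genuinely enters, so the proposition is really proved under the same non-negativity hypothesis as in Theorem~\ref{1.a}, which is the setting of the intended applications.
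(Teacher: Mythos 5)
Your overall architecture coincides with the paper's: the origin-in-interior part comes for free from the inclusion $\Pi_p^{*,s}(f,h)\subset\Pi_{p,+}^{*,s}(f,h)$ (equivalently, from (\ref{4.1}) and (\ref{3e})), and the continuity part is the same transcription of (\ref{3c})--(\ref{3g}) via $(a+b)_+\le a_++b_+$ and the H\"older trick. Where you genuinely diverge is the boundedness estimate. The paper uses the convexity inequalities $(a+b)_+^p\ge a_+^p+p\,a_+^{p-1}b$ and $(a+b)_-^p\ge a_-^p-p\,a_-^{p-1}b$ together with H\"older's inequality to bound
\begin{equation*}
\int_{\mathbb{R}^n}\bigl(f(x)-h(x+z)\bigr)_+^p\,dx\ \ge\ \tfrac12\int_{\mathbb{R}^n}\bigl(f(x)\bigr)_+^p\,dx+\tfrac12\int_{\mathbb{R}^n}\bigl(h(x)\bigr)_-^p\,dx
\end{equation*}
for all large $\|z\|$, which works for \emph{signed} $f,h$ and identifies the natural quantity $\int f_+^p+\int h_-^p$ controlling the body. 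Your route instead uses the pointwise identity $|u-v|^p=(u-v)_+^p+(u-v)_-^p$ together with $(u-v)_-\le v$, which is only valid for $u,v\ge 0$; this reduces everything to the symmetric estimate of Proposition \ref{3.a} and is cleaner, but it proves the proposition only under the non-negativity hypothesis, which the statement does not impose (the paper's proposition is stated, and proved, for general non-zero pairs). You flag this yourself, and since every application in the paper (Theorem \ref{Th7.1}, Theorem \ref{1.a}) concerns non-negative functions, the restriction is harmless for the paper's purposes — but as a proof of the proposition as stated it is a real gap, and the convexity argument is precisely the ingredient you would need to close it. One further caveat applies equally to both proofs: the final lower bound is strictly positive only when $\int f_+^p+\int h_-^p>0$ (in your setting, only when $\|f\|_p>0$), so the hypothesis ``non-zero $(f,h)$'' must be read as excluding $f\equiv 0$; otherwise the body is genuinely unbounded and the statement fails.
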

	\begin{proof}
		By the convexity of  the functions $\left ( a \right )^{p} _{+}$ and $\left ( a \right )^{p} _{-} $, the inequalities $\left ( a+b \right )^{p} _{+}\ge \left ( a \right )^{p} _{+}+p\left ( a \right )^{p-1} _{+}b  $ and $\left ( a+b \right )^{p} _{-}\ge \left ( a \right )^{p} _{-}-p\left ( a \right )^{p-1} _{-}b$ hold for $a,b\in \mathbb{R}$.
		
		If $\int_{\mathbb{R}^{n} }\left ( f\left ( x \right )   \right )_{+} ^{p}dx>0$, take $\varepsilon >0$ sufficiently small such that $\varepsilon +p\varepsilon ^{\frac{1}{p} }\left \| f \right \|^{p-1} _{p}\le \frac{1}{2}\int_{\mathbb{R}^{n} }\left ( f\left ( x \right )  \right )_{+} ^{p}dx$, and choose $r>0$ large enough so that  $\int_{\mathbb{R}^{n}\setminus r B^{n}  }\left (f\left ( x \right )  \right )^{p}_{+}dx<\varepsilon $ and $\int_{\mathbb{R}^{n}\setminus r B^{n}  }\left | h\left ( x \right )  \right |^{p}dx<\varepsilon $. Then for $z\in \mathbb{R}^{n}\setminus 2rB^{n} $, we obtain by H\"{o}lder's inequality that
		\begin{eqnarray}
			&&\int_{rB^{n} }\left ( f\left ( x \right )-h\left ( x+z \right )   \right )_{+} ^{p}dx\nonumber\\
			&\ge&\int_{rB^{n} }\left ( f\left ( x \right )  \right )_{+} ^{p}-p\left ( f\left ( x \right )  \right )_{+} ^{p-1}h\left ( x+z \right )dx\nonumber\\
			&\ge&\int_{rB^{n} }\left ( f\left ( x \right )  \right )_{+} ^{p}dx-p\left ( \int_{rB^{n} }\left ( f\left ( x \right )  \right )_{+} ^{p}dx    \right )^{\frac{p-1}{p} }\left ( \int_{rB^{n} }\left | h\left ( x+z \right )  \right |^{p}dx    \right )^{\frac{1}{p} }\nonumber\\
			&\ge&\int_{rB^{n} }\left ( f\left ( x \right )  \right )_{+} ^{p}dx-p\left ( \int_{\mathbb{R}^{n} }\left ( f\left ( x \right )  \right )_{+} ^{p}dx    \right )^{\frac{p-1}{p} }\left ( \int_{\mathbb{R}^{n}\setminus rB^{n}  }\left | h\left ( x \right )  \right |^{p}dx    \right )^{\frac{1}{p} }\nonumber\\
			&\ge& \int_{\mathbb{R}^{n} }\left ( f\left ( x \right )  \right )^{p} _{+}dx-\varepsilon -p\left \| f \right \|_{p} ^{p-1}\varepsilon ^{\frac{1}{p} } \nonumber\\
			&\ge&\frac{1}{2}\int_{\mathbb{R}^{n} }\left ( f\left ( x \right )  \right )_{+} ^{p}dx \nonumber.
		\end{eqnarray} 
		When $\int_{\mathbb{R}^{n} }\left ( f\left ( x \right )  \right )^{p} _{+}dx=0$, the previous inequality is trivially true for any  $r>0$.
		
		If $\int_{\mathbb{R}^{n} }\left ( h\left ( x \right )   \right )_{-} ^{p}dx>0$, take $\varepsilon >0$ sufficiently small such that $\varepsilon +p\varepsilon ^{\frac{1}{p} }\left \| h \right \|^{p-1} _{p}\le \frac{1}{2}\int_{\mathbb{R}^{n} }\left ( h\left ( x \right )  \right )_{-} ^{p}dx$, and choose $r>0$  large enough so that $\int_{\mathbb{R}^{n}\setminus r B^{n}  }\left ( h\left ( x \right )  \right )^{p}_{-}dx<\varepsilon $ and  $\int_{\mathbb{R}^{n}\setminus r B^{n}  }\left | f\left ( x \right )  \right |^{p}dx<\varepsilon $. Then for $z\in \mathbb{R}^{n}\setminus 2rB^{n} $, we obtain by H\"{o}lder's inequality that
		\begin{eqnarray}
			&&\int_{rB^{n}-z }\left ( f\left ( x \right )-h\left ( x+z \right )   \right )^{p} _{+}dx\nonumber\\
			&=&\int_{rB^{n} }\left ( h\left ( x \right )-f\left ( x-z \right )   \right )^{p} _{-}dx\nonumber\\
			&\ge&\int_{rB^{n} }\left (h\left  ( x \right )  \right )_{-} ^{p}+p\left ( h\left ( x \right )  \right )_{-} ^{p-1}f\left ( x-z \right )dx\nonumber\\
			&\ge&\int_{rB^{n} }\left ( h\left ( x \right )  \right )_{-} ^{p}-p\left ( h\left ( x \right )  \right )_{-} ^{p-1}\left|f\left ( x-z \right )\right|dx\nonumber\\
			&\ge&\int_{rB^{n} }\left ( h\left ( x \right )  \right )_{-} ^{p}dx-p\left ( \int_{rB^{n} }\left ( h\left ( x \right )  \right )_{-} ^{p}dx    \right )^{\frac{p-1}{p} }\left ( \int_{rB^{n} }\left | f\left ( x-z \right )  \right |^{p}dx    \right )^{\frac{1}{p} }\nonumber\\
				&\ge&\int_{rB^{n} }\left ( h\left ( x \right )  \right )_{-} ^{p}dx-p\left ( \int_{\mathbb{R}^{n} }\left ( h\left ( x \right )  \right )_{-} ^{p}dx    \right )^{\frac{p-1}{p} }\left ( \int_{\mathbb{R}^{n}\setminus rB^{n}  }\left | f\left ( x \right )  \right |^{p}dx    \right )^{\frac{1}{p} }\nonumber\\
			&\ge& \int_{\mathbb{R}^{n} }\left ( h\left ( x \right )  \right )^{p} _{-}dx-\varepsilon -p\left \| h \right \|_{p} ^{p-1}\varepsilon ^{\frac{1}{p} } \nonumber\\
			&\ge&\frac{1}{2}\int_{\mathbb{R}^{n} }\left ( h\left ( x \right )  \right )^{p} _{-}dx.\nonumber
		\end{eqnarray}
		In case $\int_{\mathbb{R}^{n} }\left ( h\left ( x \right )   \right )_{-} ^{p}dx=0$ the inequality is satisfied trivially for arbitary $r>0$.
		
		It follows that $\int_{\mathbb{R}^{n} }\left ( f\left ( x \right )-h\left ( x+z \right )   \right )^{p} _{+}dx\ge  \frac{1}{2}\int_{\mathbb{R}^{n} }\left ( f\left ( x \right )  \right )_{+} ^{p}dx+\frac{1}{2}\int_{\mathbb{R}^{n} }\left ( h\left ( x \right )  \right )_{-} ^{p}dx$ for every $z\in \mathbb{R}^{n}\setminus 2rB^{n}$ with $r>0$ depending only on $f$, $h$. Finally, 
		\begin{eqnarray}
			\left \| \xi  \right \|_{{\textstyle \Pi_{p,+ }^{*,s}}\left ( f,h \right )} ^{ps} &\ge& \int_{2r}^{\infty  }t^{-ps-1}\int_{\mathbb{R}^{n} }\left ( f\left ( x \right )-h\left ( x-t\xi  \right )   \right )_{+ } ^{p}dxdt\nonumber\\
			&\ge&\int_{2r}^{\infty }t^{-ps-1}dt \frac{1}{2}\int_{\mathbb{R}^{n} }\left ( f\left ( x \right )  \right )_{+}^{p}+\left ( h\left ( x \right )  \right )_{-} ^{p}dx\nonumber\\
			&\ge&\frac{\left ( 2r \right )^{-ps}  }{ps}\frac{1}{2}\int_{\mathbb{R}^{n} }\left (f\left ( x \right )  \right )_{+}^{p}+\left ( h\left ( x \right )  \right )_{-} ^{p}dx\nonumber\\
			&\ge& c^{ps}\nonumber, 
		\end{eqnarray}
	which implies that ${\Pi}_{p,+}^{*,s}(f,h)\subset c{B}^{n}$ for $c>0$ independent of $s$.
	
		Note that $ {\textstyle \Pi_{p}^{*,s}}\left ( f,h \right )\subset{\textstyle \Pi_{p,+}^{*,s}}\left ( f,h \right )$. Therefore, it follows frome Proposition \ref{3.a} that ${\textstyle \Pi_{p,+}^{*,s}}\left ( f,h \right ) $ contains the origin in its interior, that is, there is $c_{3}>0$ such that 
		\begin{eqnarray}\label{4.c}
			\left \| x \right \|_{{\textstyle \Pi_{p,+}^{*,s}}\left ( f,h \right ) }\le c_{3}\left \| x \right \|\label{1.3}
		\end{eqnarray}
		for $x\in \mathbb{R}^{n} $. Similarly, there exists $c_{4}>0$ independent of $x$ such that
		\begin{eqnarray}\label{e4.5}
		\left \| x \right \|_{{\textstyle \Pi_{p,+}^{*,s}}\left ( f,f \right ) }\le c_{4}\left \| x \right \|.
		\end{eqnarray}
		
		Finally, we show that $\left \| \cdot   \right \|_{{\textstyle \Pi_{p,+ }^{*,s}}\left ( f,h \right )} ^{ps}$ is continuous. Observe that the inequality $\left ( a+b \right )^{p} _{+}\le \left ( a_{+}+b_{+}   \right )^{p} $ holds for any $a,b\in \mathbb{R}$. Hence, for $\xi ,\eta\in \mathbb{R}^{n} \setminus\{0\} $, we obtain that
		\begin{eqnarray}
			&&\int_{\mathbb{R}^{n} }\left ( f\left ( x+t\xi +t\eta  \right )-h\left ( x \right )   \right )^{p} _{+}dx\nonumber\\
			&=&\int_{\mathbb{R}^{n} }\left (f\left ( x+t\xi +t\eta  \right )-f\left ( x+t\xi   \right )+f\left ( x+t\xi   \right )-h\left ( x \right ) \right )^{p} _{+}dx\nonumber\\
			&\le&\int_{\mathbb{R}^{n} }\left (\left( f\left ( x+t\xi +t\eta  \right )-f\left ( x+t\xi   \right )\right)_{+} +\left(f\left ( x+t\xi   \right )-h\left ( x \right )\right)_{+} \right )^{p} dx\nonumber\\
			&\le& \int_{\mathbb{R}^{n} }\left ( 1+\left \| \eta  \right \|^{\frac{s}{2}\cdot \frac{p}{p-1}  }   \right )^{p-1}\left ( \left \| \eta  \right \|^{-\frac{ps}{2}}\left( f\left ( x+t\xi +t\eta  \right )-f\left ( x+t\xi   \right )\right)^{p} _{+}  +\left(f\left ( x+t\xi   \right )-h\left ( x \right )\right)^{p} _{+}   \right )dx\nonumber   
		\end{eqnarray}
		where we used the inequality 
		\begin{eqnarray*}
			a+b\le {(1 + {r}^{p/(p-1)})}^{(p-1)/p}{({({r}^{-1}a)}^{p}  +{b}^{p})}^{1/p}
		\end{eqnarray*} 		
		for $a, b, r > 0$, which is a consequence of H\"older’s inequality. Thus, integrating and using (\ref{e4.5}), for $c_{4}>0$,  we obtain
		\begin{eqnarray}
			\left \| \xi +\eta  \right \|_{ {\textstyle \Pi_{p,+}^{\ast ,s}}\left ( f,h \right )} ^{ps} &\le&\left ( 1+\left \| \eta  \right \|^{\frac{s}{2}\cdot \frac{p}{p-1}  }   \right )^{p-1}\left ( \left \| \eta  \right \|^{-\frac{ps}{2} }\left \| \eta  \right \|_{ {\textstyle \Pi_{p,+}^{\ast ,s}}\left ( f,f \right )  } ^{ps}+ \left \| \xi  \right \|_{ {\textstyle \Pi_{p,+}^{\ast ,s}}\left ( f,h \right )  } ^{ps}   \right )\nonumber\\
			&\le&\left ( 1+\left \| \eta  \right \|^{\frac{s}{2}\cdot \frac{p}{p-1}  }   \right )^{p-1}\left ( c_{4}\left \| \eta  \right \|^{\frac{ps}{2} }+\left \| \xi   \right \|_{ {\textstyle \Pi_{p,+}^{\ast ,s}}\left ( f,h \right )  } ^{ps}\right ).\label{1.4}
		\end{eqnarray}
		Applying inequality \eqref{1.4} to the vectors $\xi +\eta $ and $-\eta$, we have
		\begin{eqnarray}
			\left \| \xi   \right \|_{ {\textstyle \Pi_{p,+}^{\ast ,s}}\left ( f,h \right )  } ^{ps}&=&\left \| \xi +\eta-\eta   \right \|_{ {\textstyle \Pi_{p,+}^{\ast ,s}}\left ( f,h \right )  } ^{ps}\nonumber\\
			&\le& \left ( 1+\left \| -\eta  \right \|^{\frac{s}{2}\cdot \frac{p}{p-1}  }   \right )^{p-1}\left ( c_{4}  \left \| -\eta  \right \|^{\frac{ps}{2} }+\left \| \xi +\eta \right \|_{ {\textstyle \Pi_{p,+}^{\ast ,s}}\left ( f,h \right )  } ^{ps}\right),\nonumber
		\end{eqnarray}
		which implies
		\begin{eqnarray}
			\left \| \xi +\eta \right \|_{ {\textstyle \Pi_{p,+}^{\ast ,s}}\left ( f,h \right )  } ^{ps}\ge \left ( 1+\left \| \eta  \right \|^{\frac{s}{2}\cdot \frac{p}{p-1}  }   \right )^{1-p}\cdot \left \| \xi   \right \|_{ {\textstyle \Pi_{p,+}^{\ast ,s}}\left ( f,h \right )  } ^{ps}-c_{4}  \left \| \eta  \right \|^{\frac{ps}{2} } \label{1.5}.
		\end{eqnarray}
		The continuity of $\left \| \cdot   \right \|_{{\textstyle \Pi_{p,+ }^{*,s}}\left ( f,h \right )} ^{ps}$ can be derived from \eqref{1.4} and \eqref{1.5}.
	\end{proof}
	\section{Generalized anisotropic fractional $L_p$ P\'{o}lya–Szeg\"{o} inequalities}\label{s5}
	We present new anisotropic P\'{o}lya–Szeg\"{o} inequalities for fractional $L_{p}$ Sobolev norms, including both symmetric and asymmetric settings.
	\begin{lemma}\label{5.a} Let $K \subset \mathbb{R}^n$ be a star body, if $f \in L^p(\mathbb{R}^n)$, $h \in L^p(\mathbb{R}^n)$ are non-negative,  then
		\begin{eqnarray}\label{5a}
			\int_{\mathbb{R}^n}\int_{\mathbb{R}^n}\frac{{(f\left ( x \right )-h\left ( y \right ))} _{+} ^{p}}{\left \| x-y \right \|^{n+ps} _{K} }dx dy
			\ge \int_{\mathbb{R}^n}\int_{\mathbb{R}^n}\frac{{(f^{\star}\left ( x \right )-h^{\star}\left ( y \right ))} _{+} ^{p}}{\left \| x-y \right \|^{n+ps} _{K^{\star}} }dxdy.   
		\end{eqnarray}
		Equality holds for non-zero  $\left ( f,h \right )\in W^{s,p}\left ( \mathbb{R}^{n},\mathbb{R}^{2}   \right )$ if and only if $K$ is an ellipsoid,
		 $f= f^{\star }(\phi x+x_{0} )$, $h= h^{\star }(\phi x+x_{0} )$ for some $\phi \in GL(n)$ and $x_{0}\in \mathbb{R}^n$.
	\end{lemma}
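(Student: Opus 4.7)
My strategy is to reduce inequality (\ref{5a}) to a one-parameter family of Riesz-type rearrangement inequalities for indicator functions via a layer-cake decomposition, and then read off the equality characterization from Burchard's criterion.

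The starting point is the pointwise identity, easily verified for $a, b \geq 0$ and $p > 1$:
\[ (a-b)_+^p = p(p-1) \iint_{0 \leq s < t} (t-s)^{p-2}\, 1_{\{a > t\}}\, 1_{\{b \leq s\}}\, ds\, dt. \]
Applied with $a = f(x), b = h(y)$, Tonelli's theorem (all terms non-negative) lets me swap orders and rewrite the left-hand side of (\ref{5a}) as
\[ p(p-1) \iint_{0 \leq s < t} (t-s)^{p-2}\, I_K(s, t)\, ds\, dt, \quad I_K(s, t) := \iint \frac{1_{\{f > t\}}(x)\, 1_{\{h \leq s\}}(y)}{\|x-y\|_K^{n+ps}}\, dx\, dy. \]
The key step is the pointwise comparison $I_K(s, t) \geq I_{K^\star}^{\mathrm{sym}}(s, t)$, where the right-hand side is the analogous quantity formed from $f^\star, h^\star, K^\star$. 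To prove it, I write $1_{\{h \leq s\}} = 1 - 1_{\{h > s\}}$ (and likewise for $h^\star$) and truncate the kernel to $K_R(z) := \|z\|_K^{-(n+ps)}\, 1_{\{|z| \leq R\}}$. For finite $R$ the expression splits into a term $|\{f > t\}| \int K_R$, which is preserved under the joint rearrangement (since $|\{f > t\}| = |\{f^\star > t\}|$ and $K_R$ is equidistributed with its symmetric decreasing rearrangement $K_R^\star$), minus the bilinear remainder $\iint 1_{\{f > t\}}(x)\, 1_{\{h > s\}}(y)\, K_R(x-y)\, dx\, dy$, which is bounded above by its rearranged analogue through Theorem~\ref{Rri}. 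Subtracting yields the comparison for $K_R$, and monotone convergence as $R \to \infty$ (using $K_R \uparrow K$ and $K_R^\star \uparrow K^\star$ pointwise) upgrades it to the full inequality. Multiplying by $p(p-1)(t-s)^{p-2}$, integrating over $0 \leq s < t$, and inverting the layer-cake identity on $(f^\star, h^\star)$ delivers the right-hand side of (\ref{5a}).

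For the equality statement, suppose $(f, h) \in W^{s,p}(\mathbb{R}^n, \mathbb{R}^2)$ is non-zero and (\ref{5a}) holds with equality. Equality must then propagate back to equality in the Riesz application to the triple $(1_{\{f > t\}}, 1_{\{h > s\}}, K_R)$ for almost every $(s, t)$ in the weighted support and every sufficiently large $R$. Burchard's theorem (Theorem~\ref{2.b}) forces $\{f > t\}, \{h > s\}$, and (in the $R \to \infty$ limit) the unit ball of $\|\cdot\|_K$ to be homothetic centered ellipsoids, with translates satisfying the compatibility $c = a + b$. Coupling this rigidity across the continuum of levels forces a single $\phi \in GL(n)$ and $x_0 \in \mathbb{R}^n$ to serve all levels, yielding $K$ an ellipsoid and $f = f^\star(\phi x + x_0)$, $h = h^\star(\phi x + x_0)$.

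The principal obstacle is executing the Riesz step rigorously: the set $\{h \leq s\}$ has infinite Lebesgue measure and the kernel $\|\cdot\|_K^{-(n+ps)}$ is non-integrable on $\mathbb{R}^n$, so Theorem~\ref{Rri} cannot be invoked directly. The truncation plus monotone convergence scheme circumvents this but requires careful bookkeeping. A secondary subtlety is propagating the per-level ellipsoidal rigidity from Burchard into a single choice of $(\phi, x_0)$ uniform in the level parameters $(s, t)$.
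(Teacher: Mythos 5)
Your overall architecture (a double layer-cake reduction to Riesz-type inequalities for indicators, the $1-1_{\{h>s\}}$ trick to handle the infinite-measure set $\{h\le s\}$, Burchard's theorem for the equality case) is the same as the paper's, but the truncation step that is supposed to legitimize the splitting does not work, and this is a genuine gap. Your cutoff $K_R(z)=\|z\|_K^{-(n+ps)}1_{\{|z|\le R\}}$ removes the tail of the kernel at infinity, where $\|z\|_K^{-(n+ps)}$ is already integrable, but retains the singularity at $z=0$, where it is not: in polar coordinates $\int_{|z|\le R}\|z\|_K^{-(n+ps)}\,dz\asymp\int_0^{R'}r^{-ps-1}\,dr=\infty$. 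Hence $\|K_R\|_1=\infty$ for every $R$, so the term $|\{f>t\}|\int K_R$ equals $+\infty$ whenever $|\{f>t\}|>0$; and since $I_{K_R}(s,t)\le I_K(s,t)<\infty$ (note $I_K(s,t)\le (t-s)^{-p}$ times the left side of (\ref{5a})), the bilinear remainder $\iint 1_{\{f>t\}}(x)1_{\{h>s\}}(y)K_R(x-y)\,dx\,dy$ must also be $+\infty$. Your splitting is therefore of the form $\infty-\infty$, and the Riesz inequality applied to the remainder reads $\infty\le\infty$, carrying no information.

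The cutoff must be made at the singularity, not at infinity. This is precisely what the paper accomplishes by layer-caking the kernel itself, $\|z\|_K^{-n-ps}=\int_0^\infty 1_{t^{-1/(n+ps)}K}(z)\,dt$: each slice $k_t=1_{t^{-1/(n+ps)}K}$ is the indicator of a bounded set with $\|k_t\|_1=t^{-n/(n+ps)}|K|<\infty$, so after writing $1_{\{h<r\}}=1-1_{\{h\ge r\}}$ both terms of the splitting are finite (the paper separately checks $\int(f-r)_+^{p-1}\,dx<\infty$), the first term is rearrangement-invariant, and Riesz applies meaningfully to the second. Your scheme could be repaired by truncating near the origin instead, e.g.\ $K_\epsilon(z)=\|z\|_K^{-(n+ps)}1_{\{\|z\|_K\ge\epsilon\}}$, for which $\|K_\epsilon\|_1<\infty$ and $K_\epsilon^\star$ increases to $\|\cdot\|_{K^\star}^{-(n+ps)}$ as $\epsilon\to0$, or by adopting the paper's slicing. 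A secondary point: to invoke Burchard you need the nondegeneracy condition $|\alpha-\beta|<\gamma<\alpha+\beta$ and then an argument that the ellipsoid $D$ and the vectors $a,b$ are independent of the level parameters; you flag this as a subtlety but do not carry it out, whereas the paper at least sketches how the levels are patched together.
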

	\begin{proof}
		Writing
		\begin{equation*}
			{||z||}^{-n-ps}_{K}=\int_{0}^{\infty }k_{t}(z)dt,  
		\end{equation*}
		where $k_{t}(z)=1_{t^{-1/(n+ps)}K}(z)$, we obtain 
		\begin{equation*}
			\int_{\mathbb{R}^n}\int_{\mathbb{R}^n}\frac{{(f\left ( x \right )-h\left ( y \right ))} _{+} ^{p}}{\left \| x-y \right \|^{n+ps} _{K} }dxdy=\int_{0}^{\infty }\int_{\mathbb{R}^n}\int_{\mathbb{R}^n}{(f\left ( x \right )-h\left ( y \right ))} _{+} ^{p}
			k_{t}(x-y)dxdydt.
		\end{equation*}
		Note that
		\begin{equation}\label{e52}
			{(f\left ( x \right )-h\left ( y \right ))} _{+} ^{p}=p\int_{0}^{\infty}{(f\left ( x \right )-r)} _{+} ^{p-1}1_{\{h<r\}}(y)dr.
		\end{equation}
		 By (\ref{e52}), when $t>0$, we have 
		\begin{eqnarray*}
			&&\int_{\mathbb{R}^n}\int_{\mathbb{R}^n}{(f\left ( x \right )-h\left ( y \right ))} _{+}^{p}k_{t}(x-y)dxdy\nonumber\\
			&=&p\int_{0}^{\infty}\int_{\mathbb{R}^n}\int_{\mathbb{R}^n}{(f\left ( x \right )-r)} _{+} ^{p-1}
			k_{t}(x-y)1_{\{h<r\}}(y)dxdydr\nonumber\\ 
			&=&p\int_{0}^{\infty}\int_{\mathbb{R}^n}\int_{\mathbb{R}^n}{(f\left ( x \right )-r)} _{+} ^{p-1}
			k_{t}(x-y)(1-1_{\{h\ge r\}}(y))dxdydr.  
		\end{eqnarray*}

And note that
		\begin{eqnarray}
			&&p\int_{\mathbb{R}^n}\int_{\mathbb{R}^n}{(f\left ( x \right )-r)} _{+} ^{p-1}
			k_{t}(x-y)(1-1_{\{h\ge r\}}(y))dxdy\nonumber\\
			&=&p{||k_{t}||}_{1}\int_{\mathbb{R}^n}{(f\left ( x \right )-r)} _{+} ^{p-1}dx-p\int_{\mathbb{R}^n}\int_{\mathbb{R}^n}{(f\left ( x \right )-r)} _{+} ^{p-1}
			k_{t}(x-y)1_{\{h\ge r\}}(y)dxdy.\nonumber\\\label{e78}
		\end{eqnarray}
		
		Next, we  prove $\int_{\mathbb{R}^n}{(f\left ( x \right )-r)} _{+} ^{p-1}dx<\infty$. By  H\"{o}lder's inequality, we obtain
		\begin{eqnarray*}
			\int_{\mathbb{R}^n}{(f\left ( x \right )-r)} _{+} ^{p-1}dx
			=\int_{\left \{ f(x)\ge r \right \} }{(f\left ( x \right )-r)}^{p-1}dx
			\le\left (  \int_{\left \{ f(x)\ge r \right \}} \left ( f(x)-r \right )^{p}dx\right )^{\frac{p-1}{p}}\left |\left \{  f(x)\ge r \right \}  \right |^{\frac{1}{p} }  .
		\end{eqnarray*}
		Then we only need to prove $\int_{\left \{ f(x)\ge r \right \}}{(f\left ( x \right )-r)} ^{p}dx<\infty$. Let $g(x)=f(x)-r $ for every $x\in \mathbb{R}^{n}$, for $f \in L^p(\mathbb{R}^n)$, by Fubini's theorem, we have
		\begin{eqnarray*}
			&&\int_{\left \{ f(x)\ge r \right \}}{(f\left ( x \right )-r)}^{p}dx=\int_{\left \{ g(x)>0 \right \} } g(x)^{p}dx=\int_{0}^{\infty }\left | \left \{ {g\left ( x \right )^{p}>t} \right \} \right |dt\nonumber\\
			& =&\int_{0}^{\infty }\left | \left \{ {g\left ( x \right )>t^{1/p}} \right \} \right |dt =\int_{0}^{\infty }\left | \left \{ f\left ( x \right )>r+t^{1/p} \right \} \right |
			\le\int_{0}^{\infty }\left | \left \{ f\left ( x \right )>t^{1/p} \right \} \right |dt \nonumber\\
			&=&\int_{0}^{\infty }\left | \left \{ f\left ( x \right )^{p}>t \right \} \right |dt
			=\int_{\left \{ f(x)>0 \right \} }\left | f(x) \right |^{p} dx< \infty.       
		\end{eqnarray*}
		 We have proved that the first term of (\ref{e78}) is finite. Clearly, the first term is invariant under symmetric decreasing rearrangement. For the second term of (\ref{e78}), by the Riesz rearrangement inequality, i.e., Theorem \ref{Rri}, we have
		\begin{eqnarray*}
			&&\int_{\mathbb{R}^n}\int_{\mathbb{R}^n}{(f\left ( x \right )-r)} _{+} ^{p-1}
			k_{t}(x-y)1_{\{h\ge r\}}(y)dxdy\nonumber\\ 
			&\leq& \int_{\mathbb{R}^n}\int_{\mathbb{R}^n}{(f^{\star}\left ( x \right )-r)} _{+} ^{p-1}
			k^{\star}_{t}(x-y)1_{\{h^{\star}\ge r\}}(y)dxdy
		\end{eqnarray*}
		for $r$, $t>0$. Note that
		\begin{equation*}
			{(f\left ( x \right )-r)} _{+} ^{p-1}=(p-1)\int_{0}^{\infty}{(\tilde{r}-r)} _{+} ^{p-2}1_{\{f\geq \tilde{r}\}}(x)d\tilde{r},
		\end{equation*}
		and that the corresponding equation holds for $f^{\star}$. Hence, if there is equality in (\ref{5a}),
		then, for $(\tilde{r},\;r,\;t)\in(0,\infty)^3\setminus M$ with $M\subset(0,\infty)^3$ and $|M|=0$, we have
		\begin{eqnarray*}
			&&\int_{\mathbb{R}^n}\int_{\mathbb{R}^n}1_{\{f\geq \tilde{r}\}}(x)1_{t^{-1/(n+ps)K}}(x-y)1_{\{h\geq r\}}(y)dxdy\nonumber\\ 
			&=&\int_{\mathbb{R}^n}\int_{\mathbb{R}^n}1_{\{f^{\star}\geq \tilde{r}\}}(x)1_{t^{-1/(n+ps)K^{\star}}}(x-y)1_{\{h^{\star}\geq r\}}(y)dxdy.\nonumber
		\end{eqnarray*}
		For almost every $(\tilde{r},\;r)\in(0,\infty)^2$, we have $(\tilde{r},\;r,\;t)\in(0,\infty)^3\setminus M$ for almost every
		$t>0$. Let $\alpha$, $\beta$ and $\gamma$  be the radii of ${\{f\geq \tilde{r}\}}^{\star}$, $t^{-1/(n+ps)}K^{\star}$ and ${\{h\geq r\}}^{\star}$, respectively. 
		Fixed $\tilde{r}$ and $ t$, 
		then there exists a closed interval $[{\bar{c}}_{\tilde{r},t},{\tilde{c}}_{\tilde{r},t}]$ with respect to $ \tilde{r} $ and $ t $ such that $ |\alpha-\beta| <\gamma<\alpha+\beta$ for any $r\in[{\bar{c}}_{\tilde{r},t},{\tilde{c}}_{\tilde{r},t}]$. Thus by Theorem \ref{2.b}, there exists an ellipsoid $D$ and vectors $a, b \in \mathbb{R}^n$ such that  
		\begin{eqnarray}
			\{f\geq \tilde{r}\}=a+\alpha D,\;t^{-1/(n+ps)}K=b+\beta D,\;\{h\geq r\}={c}_{r}+\gamma D
			\notag
		\end{eqnarray}  
		where ${c}_{r} = a + b$. Then for $r\in[{\bar{c}}_{\tilde{r},t},{\tilde{c}}_{\tilde{r},t}]$, ${c}_{r}$ is a constant vector. When $\tilde{r}$ and $t$ vary, the interval $\displaystyle\bigcup_{\tilde{r},t>0} [{\bar{c}}_{\tilde{r},t},{\tilde{c}}_{\tilde{r},t}]$ covers $(0, +\infty)$, thus for any $r\in(0, +\infty)$, ${c}_{r}$ is a constant vector.
		Since $K={t}^{1/(n+ps)}b+{(|K|/  
			|D|)}^{1/n}D$, the ellipsoid $D$
		is independent of $(\tilde{r},\;r,\;t)$, , which implies that the vector $b$ is a constant vector. Thus vector $a$ is also a constant vector. This completes the proof.
	\end{proof}
	Next, we prove the symmetric version of new anisotropic P\'{o}lya–Szeg\"{o} inequalities.     
	\begin{lemma}\label{1.2}
		 Let $K \subset \mathbb{R}^n$ be a star body, if $f \in L^p(\mathbb{R}^n)$, $h \in L^p(\mathbb{R}^n)$ are non-negative,  then
		\begin{eqnarray}\label{e5.3}
			\int_{\mathbb{R}^{n} }\int_{\mathbb{R}^{n} }\frac{\left | f\left ( x \right )-h\left ( y \right ) \right | ^{p}    }{\left \| x-y \right \|^{n+ps} _{K}  }dxdy\ge\int_{\mathbb{R}^{n} }\int_{\mathbb{R}^{n} }\frac{\left | f^{\star } \left ( x \right )-h^{\star } \left ( y \right ) \right | ^{p}   }{\left \| x-y \right \|^{n+ps} _{K^{\star } }  }dxdy. 
		\end{eqnarray}
		Equality holds for non-zero $\left ( f,h \right )\in W ^{s,p} \left ( \mathbb{R}^{n},\mathbb{R}^{2}   \right )$ if and only if $K$ is an ellipsoid, $f= f^{\star }(\phi x+x_{0} )$, $h= h^{\star }(\phi x+x_{0} )$ for some $\phi \in GL(n)$ and $x_{0}\in \mathbb{R}^n$.. \nonumber
	\end{lemma}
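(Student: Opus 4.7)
The plan is to deduce the symmetric inequality from the asymmetric version Lemma \ref{5.a} via the pointwise identity $|a|^p=(a)_+^p+(a)_-^p$, valid for every $a\in\mathbb{R}$ and $p\ge 1$. Applied pointwise inside the integrand, this decomposes the left-hand side of (\ref{e5.3}) into a sum of two anisotropic asymmetric energies:
\begin{eqnarray*}
\int_{\mathbb{R}^{n}}\int_{\mathbb{R}^{n}}\frac{|f(x)-h(y)|^{p}}{\|x-y\|_{K}^{n+ps}}dxdy
&=&\int_{\mathbb{R}^{n}}\int_{\mathbb{R}^{n}}\frac{(f(x)-h(y))_{+}^{p}}{\|x-y\|_{K}^{n+ps}}dxdy\\
&&+\int_{\mathbb{R}^{n}}\int_{\mathbb{R}^{n}}\frac{(f(x)-h(y))_{-}^{p}}{\|x-y\|_{K}^{n+ps}}dxdy.
\end{eqnarray*}

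The first summand is bounded below by $\int\int(f^{\star}(x)-h^{\star}(y))_{+}^{p}\|x-y\|_{K^{\star}}^{-n-ps}\,dxdy$ by a direct application of Lemma \ref{5.a}. For the second summand I write $(f(x)-h(y))_{-}^{p}=(h(y)-f(x))_{+}^{p}$, swap the roles of the integration variables, and use $\|y-x\|_{K}=\|x-y\|_{-K}$ to rewrite the integral as the asymmetric $+$ energy of the pair $(h,f)$ with respect to $-K$. Applying Lemma \ref{5.a} to this configuration (with $f,h,K$ replaced by $h,f,-K$) gives a lower bound in terms of $h^{\star},f^{\star}$ and $(-K)^{\star}$. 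Since Schwarz symmetrization depends only on volume, $(-K)^{\star}=K^{\star}$; reversing the swap of variables and noting that $\|x-y\|_{K^{\star}}$ is symmetric in $x,y$ (because $K^{\star}$ is origin-symmetric) produces the lower bound $\int\int(f^{\star}(x)-h^{\star}(y))_{-}^{p}\|x-y\|_{K^{\star}}^{-n-ps}\,dxdy$ for the $-$ term. Summing and applying the same identity $|a|^p=a_+^p+a_-^p$ on the right-hand side yields (\ref{e5.3}).

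For the equality statement, equality in (\ref{e5.3}) forces equality in each summand separately. The equality clause of Lemma \ref{5.a} applied to the $+$ summand immediately provides $\phi\in GL(n)$ and $x_{0}\in\mathbb{R}^{n}$ such that $K$ is an ellipsoid and $f=f^{\star}(\phi x+x_{0})$, $h=h^{\star}(\phi x+x_{0})$. The converse is immediate: if $(f,h,K)$ take this form then Lemma \ref{5.a} gives equality in both the $+$ and the $-$ summands, hence in (\ref{e5.3}). The main subtlety will be the bookkeeping for the $-$ term, namely checking that swapping variables and replacing $K$ by $-K$ is compatible with symmetrization because $(-K)^{\star}=K^{\star}$ and $K^{\star}$ is centered; once this is set up, no new analytic ingredient beyond Lemma \ref{5.a} is required.
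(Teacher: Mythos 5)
Your proposal is correct and follows essentially the same route as the paper: the paper likewise splits $|f(x)-h(y)|^p$ into the $(\cdot)_+^p$ and $(\cdot)_-^p$ parts, handles the minus part by rewriting it as the plus energy of $(h,f)$ with respect to $-K$ and using $(-K)^{\star}=K^{\star}$, and derives the equality case from that of Lemma \ref{5.a}.
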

	\begin{proof}
	By Lemma \ref{5.a}, we obtain 
	\begin{eqnarray}
	\int_{\mathbb{R}^{n} }\int_{\mathbb{R}^{n} }\frac{\left ( f\left ( x \right )-h\left ( y \right )  \right )^{p} _{-}}{\left \| x-y \right \|^{n+ps} _{K} }dxdy&=&\int_{\mathbb{R}^{n} }\int_{\mathbb{R}^{n} }\frac{\left ( h\left ( y \right )-f\left ( x \right )  \right )^{p} _{+}}{\left \| y-x \right \|^{n+ps} _{-K} }dxdy\nonumber\\
	&\ge&\int_{\mathbb{R}^{n} }\int_{\mathbb{R}^{n} }\frac{\left ( h^{\star } \left ( y \right )-f^{\star } \left ( x \right )  \right )^{p} _{+}}{\left \| y-x \right \|^{n+ps} _{(-K)^{\star } } }dxdy\nonumber\\
	&=&\int_{\mathbb{R}^{n} }\int_{\mathbb{R}^{n} }\frac{\left ( f^{\star } \left ( x \right )-h^{\star } \left ( y \right )  \right )^{p} _{-}}{\left \| x-y \right \|^{n+ps} _{K^{\star } } }dxdy\label{e5.4}.
	\end{eqnarray}
	By (\ref{5a}), (\ref{e5.4}) and $\left | a \right |=a_{+} +a_{-} $ for $a\in \mathbb{R}   $, we can get the desired inequality (\ref{e5.3}). The equality case in inequality in (\ref{e5.3}) follows from  the equality case of Lemma \ref{5.a}.
	\end{proof}
	\section{ Affine fractional $L_{p}$ P\'{o}lya-Szeg\"{o} inequalities}\label{s6}
	We  establish  affine fractional $L_{p}$ P\'{o}lya-Szeg\"{o} inequalities for genrealized fractional asymmetric and symmetric $L_{p}$ polar projection bodies.
	\begin{lemma}\label{1.3}
		If $\left ( f,h \right )\in W^{s,p}\left ( \mathbb{R}^{n},\mathbb{R}^{2}   \right )$ and $f\in W^{s,p}(\mathbb{R}^{n})$ are non-negative, then
		\begin{eqnarray}\label{6.1}
			\left |  {\textstyle\Pi_{p,+}^{*,s}}\left ( f,h \right )  \right |^{-ps/n}\ge \left | {\textstyle \Pi_{p,+}^{*,s}}\left ( f^{\star } ,h^{\star }  \right )  \right |^{-ps/n}.
		\end{eqnarray}
		Equality holds if and only if  $f= f^{\star }(\phi x+x_{0} )$, $h= h^{\star }(\phi x+x_{0} )$ for some $\phi \in GL(n)$ and $x_{0}\in \mathbb{R}^n$.
	\end{lemma}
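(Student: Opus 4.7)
The plan is to apply the asymmetric anisotropic Pólya-Szegő inequality of Lemma \ref{5.a} with the bootstrapping choice $K = \Pi_{p,+}^{*,s}(f,h)$, which turns the functional inequality of Lemma \ref{5.a} into the desired volume inequality via the dual mixed volume identity (\ref{4.b}).

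The proposition of Section \ref{s4} ensures that $\Pi_{p,+}^{*,s}(f,h)$ is a star body with the origin in its interior, so it is an admissible choice for $K$. Writing $K = \Pi_{p,+}^{*,s}(f,h)$ and recalling that the Schwarz symmetral $K^\star$ satisfies $|K^\star| = |K|$, the left-hand side of (\ref{5a}) becomes, by (\ref{4.b}), $n\widetilde V_{-ps}(K,K) = n|K|$, while the right-hand side becomes $n\widetilde V_{-ps}(K^\star, \Pi_{p,+}^{*,s}(f^\star, h^\star))$ (applying (\ref{4.b}) again with $K$ replaced by $K^\star$ and $(f,h)$ by $(f^\star, h^\star)$). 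Thus Lemma \ref{5.a} reads
\[
n|K| \;\geq\; n\widetilde V_{-ps}\bigl(K^\star,\, \Pi_{p,+}^{*,s}(f^\star, h^\star)\bigr).
\]
Next, the dual mixed volume inequality (\ref{2a}) with $\alpha = -ps < 0$ gives
\[
\widetilde V_{-ps}\bigl(K^\star,\, \Pi_{p,+}^{*,s}(f^\star, h^\star)\bigr) \;\geq\; |K|^{(n+ps)/n}\, \bigl|\Pi_{p,+}^{*,s}(f^\star, h^\star)\bigr|^{-ps/n},
\]
using $|K^\star| = |K|$ once more. Combining the two displays and dividing by $|K|^{(n+ps)/n}$ yields (\ref{6.1}).

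For the equality case, observe that $\Pi_{p,+}^{*,s}(f^\star, h^\star)$ is a centered ball (a change of variable using the radial symmetry of $f^\star$ and $h^\star$ shows that its gauge function is rotation invariant in $\xi$), and $K^\star$ is trivially a centered ball. Since any two centered balls are dilates, the dual mixed volume step (\ref{2a}) is automatically an equality in our context. Therefore equality in (\ref{6.1}) reduces to equality in Lemma \ref{5.a} with $K = \Pi_{p,+}^{*,s}(f,h)$, which by that lemma requires $K$ to be an ellipsoid together with $f = f^\star(\phi x + x_0)$ and $h = h^\star(\phi x + x_0)$ for some $\phi \in GL(n)$ and $x_0 \in \mathbb{R}^n$. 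A short translation-and-$SL(n)$-covariance argument for $\Pi_{p,+}^{*,s}$ (performing a change of variable in (\ref{gf})) shows that this form of $f, h$ automatically makes $K$ an affine image of the centered ball $\Pi_{p,+}^{*,s}(f^\star, h^\star)$, hence an ellipsoid; so the two conditions collapse into the single condition on $f, h$.

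The main obstacle is conceptual rather than computational: once one recognizes that the bootstrapping substitution $K = \Pi_{p,+}^{*,s}(f,h)$ is precisely what links the anisotropic functional inequality of Lemma \ref{5.a} to a pure volume inequality, the remaining steps reduce to the standard identities $\widetilde V_\alpha(K,K) = |K|$ and $|K^\star| = |K|$ together with the dual mixed volume inequality. The equality discussion requires only the additional observation that rotation invariance forces $\Pi_{p,+}^{*,s}(f^\star,h^\star)$ to be a ball, which trivializes the dual mixed volume equality clause.
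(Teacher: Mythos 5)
Your proposal is correct and follows essentially the same route as the paper: apply Lemma \ref{5.a} together with the identity (\ref{4.b}) and the dual mixed volume inequality (\ref{2a}), with the bootstrapping choice $K=\Pi_{p,+}^{*,s}(f,h)$ (the paper merely states the resulting chain of inequalities for a general star body $K$ before specializing). Your equality discussion is in fact slightly more careful than the paper's, correctly noting that the dual mixed volume step is automatically an equality between centered balls, so that equality in (\ref{6.1}) reduces to the equality case of Lemma \ref{5.a}.
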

	\begin{proof}
		By Lemma \ref{5.a}, (\ref{4.b}) and the dual mixed volume inequality (\ref{2a}), we obtain for $K\subset \mathbb{R}^{n}$ a star body that
		\begin{eqnarray}\label{6.2}
			\widetilde{V}_{-ps}\left ( K, {\textstyle\Pi_{p,+}^{*,s}}\left ( f,h \right )\right )&\ge&\widetilde{V}_{-ps}\left ( K^{\star } , {\textstyle\Pi_{p,+}^{*,s}}\left ( f^{\star } ,h^{\star }  \right )\right )\nonumber\\
			&\ge& \left |K^{\star }   \right |^{\left ( n+ps \right )/n }\left |  { {\textstyle \Pi_{p,+}^{\ast,s }} \left ( f^{\star },h^{\star}   \right )}  \right|^{-ps/n}\nonumber\\
			&=&  \left |K   \right |^{\left ( n+ps \right )/n }\left |  { {\textstyle \Pi_{p,+}^{\ast,s }} \left ( f^{\star },h^{\star}   \right )}  \right|^{-ps/n}.  
		\end{eqnarray}
		Seting $K= {\textstyle \Pi_{p,+}^{\ast,s  }}\left ( f,h \right )  $ in (\ref{6.2}), we see that
		\begin{eqnarray}
			\left | {\textstyle \Pi_{p,+}^{\ast,s  }}\left ( f,h \right )  \right |
			\ge \left | {\textstyle \Pi_{p,+}^{\ast,s  }}\left ( f,h \right )  \right |^{\left ( n+ps \right )/n }\left | {\textstyle \Pi_{p,+}^{\ast,s  }}\left ( f^{\star } ,h^{\star }  \right )  \right |^{-ps /n },
		\end{eqnarray}
		which implies  the inequality (\ref{6.1}). By Lemma \ref{5.a}, there is equality in (\ref{6.1}) if and only if  $f= f^{\star }(\phi x+x_{0} )$, $h= h^{\star }(\phi x+x_{0} )$ for some $\phi \in GL(n)$ and $x_{0}\in \mathbb{R}^n$.
	\end{proof}
	The following result can be obtained in the same way as Lemma \ref{1.3} by using  Lemma \ref{1.2} and  (\ref{3b}) instead of Lemma \ref{5.a} and (\ref{4.b}).
	\begin{lemma}\label{e6.2}
		If $f,h\in L^{p} \left ( \mathbb{R}^{n}   \right ) $ are non-negative, then
		\begin{eqnarray}
			\left |  {\textstyle\Pi_{p}^{*,s}}\left ( f,h \right )  \right |^{-ps/n}\ge \left | {\textstyle \Pi_{p}^{*,s}}\left ( f^{\star } ,h^{\star }  \right )  \right |^{-ps/n}.
		\end{eqnarray}
		Equality holds for $\left ( f,h \right )\in W^{s,p}\left ( \mathbb{R}^{n},\mathbb{R}^{2}   \right )$, $f\in W^{s,p}(\mathbb{R}^{n})$  if and only if $f= f^{\star }(\phi x+x_{0} )$, $h= h^{\star }(\phi x+x_{0} )$ for some $\phi \in GL(n)$ and $x_{0}\in \mathbb{R}^n$.
	\end{lemma}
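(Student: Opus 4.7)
The plan is to mirror the proof of Lemma \ref{1.3} essentially verbatim, substituting the symmetric anisotropic Pólya--Szegő inequality (Lemma \ref{1.2}) for its asymmetric counterpart (Lemma \ref{5.a}) and the symmetric integral identity (\ref{3b}) for the asymmetric identity (\ref{4.b}). The strategy has four moves: test against an arbitrary star body $K$, convert the resulting integrals to dual mixed volumes, apply the dual mixed volume inequality, and finally specialize $K$ to close the loop.

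Concretely, I would fix an arbitrary star body $K\subset\mathbb{R}^n$. Combining Lemma \ref{1.2} with identity (\ref{3b}) yields
$$
\widetilde{V}_{-ps}\bigl(K,\,\Pi_p^{*,s}(f,h)\bigr)\ge \widetilde{V}_{-ps}\bigl(K^{\star},\,\Pi_p^{*,s}(f^{\star},h^{\star})\bigr).
$$
Since $-ps<0$, the dual mixed volume inequality (\ref{2a}) applied to the right-hand side, together with the Schwarz-symmetral volume identity $|K^{\star}|=|K|$, gives
$$
\widetilde{V}_{-ps}\bigl(K,\,\Pi_p^{*,s}(f,h)\bigr)\ge |K|^{(n+ps)/n}\bigl|\Pi_p^{*,s}(f^{\star},h^{\star})\bigr|^{-ps/n}.
$$
Specializing $K=\Pi_p^{*,s}(f,h)$ and using $\widetilde{V}_{-ps}(K,K)=|K|$, a rearrangement produces the claimed inequality.

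For the equality case, if $f=f^{\star}(\phi x+x_0)$ and $h=h^{\star}(\phi x+x_0)$ for some $\phi\in GL(n)$ and $x_0\in\mathbb{R}^n$, then the translation invariance and $GL(n)$-equivariance of $\Pi_p^{*,s}$ noted in the introduction force $|\Pi_p^{*,s}(f,h)|^{-ps/n}=|\Pi_p^{*,s}(f^{\star},h^{\star})|^{-ps/n}$, which is equality. Conversely, equality in (\ref{6.1}) forces equality in Lemma \ref{1.2} when applied with $K=\Pi_p^{*,s}(f,h)$; the equality characterization of Lemma \ref{1.2} then supplies the required form of $f$ and $h$.

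The main subtlety is verifying the equality case cleanly: Lemma \ref{1.2} at equality additionally requires the test body $K$ to be an ellipsoid, so one must check that the specialization $K=\Pi_p^{*,s}(f,h)$ is automatically an ellipsoid when $f,h$ have the stated form. This reduces to the observation that for radial $f^{\star},h^{\star}$ the body $\Pi_p^{*,s}(f^{\star},h^{\star})$ is a centered Euclidean ball (immediate from definition (\ref{3a})), after which $GL(n)$-equivariance makes $\Pi_p^{*,s}(f,h)$ the image of a ball under a linear map, i.e. an ellipsoid. With this consistency confirmed, the argument closes exactly as in Lemma \ref{1.3}.
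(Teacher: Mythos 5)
Your proposal is correct and follows exactly the route the paper intends: the paper's own proof of this lemma is a one-line remark that one repeats the argument of Lemma \ref{1.3} with Lemma \ref{1.2} and identity (\ref{3b}) in place of Lemma \ref{5.a} and (\ref{4.b}), which is precisely what you have written out (your extra care with the equality case, noting that $\Pi_p^{*,s}(f^\star,h^\star)$ is a ball and that equivariance transports this to an ellipsoid, is a harmless elaboration of what the paper leaves implicit).
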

	
	We establish the following asymmetric affine fractional  $L_p$ P\'{o}lya-Szeg\"{o} inequalities.
	\begin{theorem}\label{Th7.1} Let $0 < s < 1$ and $1 < p < n/s$. For non-negative $(f,h)\in{W}^{s,p}(\mathbb{R}^{n},\mathbb{R}^{2})$  and $f\in W^{s,p}(\mathbb{R}^{n})$,
		\begin{eqnarray*}
			\int_{\mathbb{R}^n}\int_{\mathbb{R}^n}\frac{{(f\left ( x \right )-h\left ( y \right ))} _{+} ^{p}}{{|x-y|}^{n+ps}  }dxdy\ge n{\omega} _{n}^{\frac{n+ps}{n}} |{\Pi }_{p,+}^{*,s}(f,h)|^{-\frac{ps}{n}}\ge \int_{\mathbb{R}^n}\int_{\mathbb{R}^n}\frac{{(f^{\star}\left ( x \right )-h^{\star}\left ( y \right ))} _{+} ^{p}}{{|x-y|}^{n+ps}}dxdy.   
		\end{eqnarray*}
		 There is equality in the first inequality if $f$, $h$ are radially symmetric.There is equality in the second inequality if and only if $f= f^{\star }(\phi x+x_{0} )$, $h= h^{\star }(\phi x+x_{0} )$ for some $\phi \in GL(n)$ and $x_{0}\in \mathbb{R}^n$.
	\end{theorem}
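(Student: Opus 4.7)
The plan is to derive Theorem \ref{Th7.1} by stringing together three results already established in the excerpt: the integral representation (\ref{4.b}), the dual mixed volume inequality (\ref{2a}), and Lemma \ref{1.3}. Concretely, I would first apply (\ref{4.b}) with the specific choice $K = B^n$, noting that $\|\cdot\|_{B^n} = |\cdot|$, to rewrite
\begin{equation*}
\int_{\mathbb{R}^n}\!\int_{\mathbb{R}^n}\frac{(f(x)-h(y))_+^p}{|x-y|^{n+ps}}\,dxdy = n\widetilde{V}_{-ps}\bigl(B^n,\,\Pi_{p,+}^{*,s}(f,h)\bigr).
\end{equation*}
Since $-ps < 0$, the dual mixed volume inequality (\ref{2a}) applies and yields
\begin{equation*}
n\widetilde{V}_{-ps}\bigl(B^n,\,\Pi_{p,+}^{*,s}(f,h)\bigr) \ge n|B^n|^{(n+ps)/n}\bigl|\Pi_{p,+}^{*,s}(f,h)\bigr|^{-ps/n} = n\omega_n^{(n+ps)/n}\bigl|\Pi_{p,+}^{*,s}(f,h)\bigr|^{-ps/n},
\end{equation*}
which is exactly the first inequality.

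For the second inequality I would invoke Lemma \ref{1.3} to pass to the symmetric decreasing rearrangements:
\begin{equation*}
\bigl|\Pi_{p,+}^{*,s}(f,h)\bigr|^{-ps/n} \ge \bigl|\Pi_{p,+}^{*,s}(f^\star,h^\star)\bigr|^{-ps/n}.
\end{equation*}
The key observation then is that $\Pi_{p,+}^{*,s}(f^\star,h^\star)$ is itself a centered ball: since $f^\star$ and $h^\star$ are radially symmetric, a rotation change of variables $x\mapsto O^{-1}x$ in the defining integral (\ref{gf}) shows that $\|\xi\|_{\Pi_{p,+}^{*,s}(f^\star,h^\star)}$ depends only on $|\xi|$. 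Hence $\Pi_{p,+}^{*,s}(f^\star,h^\star)$ is a dilate of $B^n$, so the dual mixed volume inequality holds with equality for $(f^\star,h^\star)$, giving
\begin{equation*}
n\omega_n^{(n+ps)/n}\bigl|\Pi_{p,+}^{*,s}(f^\star,h^\star)\bigr|^{-ps/n} = \int_{\mathbb{R}^n}\!\int_{\mathbb{R}^n}\frac{(f^\star(x)-h^\star(y))_+^p}{|x-y|^{n+ps}}\,dxdy.
\end{equation*}
Chaining these with the application of Lemma \ref{1.3} delivers the second inequality.

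For equality, the first inequality holds as an equality precisely when $\Pi_{p,+}^{*,s}(f,h)$ is a dilate of $B^n$, which is guaranteed by radial symmetry of $f$ and $h$ via the same rotation argument; this gives the ``if'' statement in the theorem. The equality characterization in the second inequality is then inherited directly from Lemma \ref{1.3}, namely $f = f^\star(\phi x + x_0)$ and $h = h^\star(\phi x + x_0)$ for some $\phi \in GL(n)$ and $x_0 \in \mathbb{R}^n$.

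There is no real obstacle here, since all the heavy lifting---the rearrangement step on the gauge function and the dual mixed volume inequality---has already been carried out in earlier sections. The only point demanding a line of care is the radial symmetry of $\Pi_{p,+}^{*,s}(f^\star,h^\star)$, which collapses the symmetrized right-hand side onto the equality case of the dual mixed volume inequality; without this collapse the two inequalities would not fit together to form a single chain.
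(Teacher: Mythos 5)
Your proposal is correct and follows essentially the same route as the paper: the representation (\ref{4.b}) with $K=B^n$ plus the dual mixed volume inequality (\ref{2a}) for the first inequality, and Lemma \ref{1.3} together with the observation that $\Pi_{p,+}^{*,s}(f^\star,h^\star)$ is a centered ball (so (\ref{2a}) collapses to an equality) for the second. Your explicit rotation-invariance argument for why the symmetrized projection body is a ball is a detail the paper leaves implicit, but otherwise the two proofs coincide.
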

	\begin{proof}
	For the first inequality, we set $K = B^n$ in (\ref{4.b}) and apply the dual mixed volume
inequality (\ref{2a}) to obtain
\begin{eqnarray}\label{7.3}
	\int_{\mathbb{R}^n}\int_{\mathbb{R}^n}\frac{{(f\left ( x \right )-h\left ( y \right ))} _{+} ^{p}}{{|x-y|}^{n+ps}  }dxdy=
	n\widetilde{V}_{-ps}(B^n,{\Pi}_{p,+}^{*,s}(f,h))\ge
	n{\omega} _{n}^{\frac{n+ps}{n}} |{\Pi }_{p,+}^{*,s}(f,h)|^{-\frac{ps}{n}}.
\end{eqnarray}
In the above inequality (\ref{7.3}), there is equality if $f$, $h$ are radially symmetric.

For the second inequality, by Lemma \ref{1.3}, 
		\begin{equation}\label{e6.6}
			|{\Pi }_{p,+}^{*,s}(f,h)|^{-ps/n}\ge |{\Pi }_{p,+}^{*,s}(f^{\star},h^{\star})|^{-ps/n},
		\end{equation}
		with equality if $f= f^{\star }(\phi x+x_{0} )$, $h= h^{\star }(\phi x+x_{0} )$ for some $\phi \in GL(n)$ and $x_{0}\in \mathbb{R}^n$. Since $f^{\star}$, $h^{\star}$ is radially
		symmetric, ${\Pi}_{p,+}^{*,s}(f^{\star},h^{\star})$ is a ball. Hence, it follows from (\ref{4.b}) and dual mixed volume inequality (\ref{2a}) that
		\begin{equation}\label{7.2}
			n{\omega} _{n}^{\frac{n+ps}{n}} |{\Pi }_{p,+}^{*,s}(f^{\star},h^{\star})|^{-\frac{ps}{n}}=
			\int_{\mathbb{R}^n}\int_{\mathbb{R}^n}\frac{{(f^{\star}\left ( x \right )-h^{\star}\left ( y \right ))} _{+} ^{p}}{{|x-y|}^{n+ps}}dxdy.   
		\end{equation}
		By combining (\ref{e6.6}) and (\ref{7.2}), the second inequality in the theorem is obtained.
	\end{proof}
	\noindent{\bf Proof of Theorem \ref{1.a}}
		For the first inequality, we set $K = B^n$ in (\ref{3b}) and apply the dual mixed volume
		inequality (\ref{2a}) to obtain
		\begin{eqnarray}\label{e6.8}
			\int_{\mathbb{R}^n}\int_{\mathbb{R}^n}\frac{{(f\left ( x \right )-h\left ( y \right ))} ^{p}}{{|x-y|}^{n+ps}  }dxdy=
			n\widetilde{V}_{-ps}(B^n,{\Pi}_{p}^{*,s}(f,h))\ge
			n{\omega} _{n}^{\frac{n+ps}{n}} |{\Pi }_{p}^{*,s}(f,h)|^{-\frac{ps}{n}}.
		\end{eqnarray}
		In the above inequality (\ref{e6.8}), there is equality if $f$, $h$ are radially symmetric.
		
		For the second inequality, by Lemma \ref{6.2}, 
		\begin{equation}\label{7.1}
			|{\Pi }_{p}^{*,s}(f,h)|^{-ps/n}\ge |{\Pi }_{p}^{*,s}(f^{\star},h^{\star})|^{-ps/n},
		\end{equation}
		with equality if $f= f^{\star }(\phi x+x_{0} )$, $h= h^{\star }(\phi x+x_{0} )$ for some $\phi \in GL(n)$ and $x_{0}\in \mathbb{R}^n$. Since $f^{\star}$, $h^{\star}$ is radially
		symmetric, ${\Pi}_{p}^{*,s}(f^{\star},h^{\star})$ is a ball. Hence, it follows from (\ref{3b}) and dual mixed volume inequality (\ref{2a}) that
		\begin{equation}\label{e6.10}
			n{\omega} _{n}^{\frac{n+ps}{n}} |{\Pi }_{p}^{*,s}(f^{\star},h^{\star})|^{-\frac{ps}{n}}=
			\int_{\mathbb{R}^n}\int_{\mathbb{R}^n}\frac{{(f^{\star}\left ( x \right )-h^{\star}\left ( y \right ))} ^{p}}{{|x-y|}^{n+ps}}dxdy.   
		\end{equation}
		By combining (\ref{7.1}) and (\ref{e6.10}), the second inequality in the theorem is concluded.\qed

$\;$\\
\noindent{\bf No competing interest is declared.}

%



\end{document}